\newtheorem{thm}{Theorem}[section]
\newtheorem{cor}[thm]{Corollary}
\newtheorem{lemma}[thm]{Lemma}
\newtheorem{remark}[thm]{Remark}
\numberwithin{equation}{section}
\def\C{\mathbb C}
\def\N{\mathbb N}
\def\D{\mathbb D}
\def\H{\mathcal H}
\def\diam{\text{diam}}
\def\Lip{\text{Lip}}
\def\ds{\displaystyle}
\title{Sharp Nonremovability Examples for H\"{o}lder continuous quasiregular mappings in the plane}
\author{ \textit{Albert Clop} \and \textit{Ignacio Uriarte-Tuero} 
\thanks{Uriarte-Tuero is a postdoctoral fellow in the Department of Mathematics of the University of Missouri-Columbia. Clop is supported by European Union projects CODY and GALA, and also by projects MTM2007-60062 and 2005-SGR-00774.   \newline \newline AMS (2000) Classification. Primary 30C62, 35J15
 \newline Keywords   Quasiconformal, Hausdorff measure, Removability}}
\date{}
\begin{document}

\maketitle

\begin{abstract}
Let $\alpha\in(0,1)$, $K\geq 1$, and $d=2\frac{1+\alpha K}{1+K}$. Given a compact set $E\subset\C$, it is known that if $\H^d(E)=0$ then $E$ is removable for $\alpha$-H\"older continuous $K$-quasiregular mappings in the plane. The sharpness of the index $d$ is shown with the construction, for any $t>d$, of a set $E$ of Hausdorff dimension $\dim(E)=t$ which is not removable. In this paper, we improve this result and construct compact nonremovable sets $E$ such that $0<\H^d(E)<\infty$. For the proof, we give a precise planar $K$-quasiconformal mapping whose H\"older exponent is strictly bigger than $\frac{1}{K}$, and that exhibits extremal distortion properties.
\end{abstract}

\section{Introduction}

Let $\alpha\in(0,1)$. A function $f:\C\rightarrow\C$ is said to be locally $\alpha$-H\"older continuous, that is, $f\in\Lip_\alpha(\C)$, if
\begin{equation}\label{lipalfa}
|f(z)-f(w)|\leq C\,|z-w|^\alpha
\end{equation}
whenever $z,w\in\C$, $|z-w|<1$. A set $E\subset\C$ is said to be {\it{removable}} for $\alpha$-H\"older continuous analytic functions if every function $f\in\Lip_\alpha(\C)$, holomorphic on $\C\setminus E$, is actually an entire function. It turns out that there is a characterization of these sets $E$ in terms of Hausdorff measures. For $\alpha\in(0,1)$, Dol\v zenko \cite{Do} proved that a set $E$ is removable for $\alpha$-H\"older continuous analytic functions if and only if $\H^{1+\alpha}(E)=0$. When $\alpha=1$, we deal with the class of Lipschitz continuous analytic functions. Although the same characterization holds, a more involved argument, due to Uy \cite{U}, is needed to show that sets of positive area are not removable.\\
\\
The same question may be asked in the more general setting of $K$-quasiregular mappings. Given a domain $\Omega\subset\C$ and $K\geq 1$, one says that a mapping $f:\Omega\rightarrow\C$ is $K$-quasiregular in $\Omega$ if $f$ is a $W^{1,2}_{loc}(\Omega)$ solution of the Beltrami equation,
$$\overline\partial f(z)=\mu(z)\,\partial f(z)$$
for almost every $z\in\Omega$, where $\mu$, the Beltrami coefficient, is a measurable function such that $|\mu(z)|\leq\frac{K-1}{K+1}$ at almost every $z\in\Omega$. If $f$ is a homeomorphism, then $f$ is said to be $K$-quasiconformal. When $\mu=0$, one recovers the classes of analytic functions and conformal mappings on $\Omega$, respectively.\\
\\
We say that $E \subset \C$ is {\it{removable for $\alpha$-H\"older continuous $K$-quasiregular mappings}} if any function $f\in\Lip_\alpha(\C)$, $K$-quasiregular in $\C\setminus E$, is actually $K$-quasiregular on the whole plane. These sets were already studied by Koskela and Martio \cite{KM} and Kilpel\"ainen and Zhong \cite{KZ}, where some sufficient conditions for removability were given in terms of Hausdorff measures and dimension. Later, compact sets $E\subset\C$ satisfying $\H^d(E)=0$, $d=2\frac{1+\alpha K}{1+K}$ were shown to have this property (see \cite{C}). The sharpness of the index $d$ was proved in \cite{C2}. More precisely, given $\alpha\in(0,1)$ and $K\geq 1$, there exists for any $t>d$ a compact set $E$ of dimension~$t$, and a function $f\in\Lip_\alpha(\C)$ which is $K$-quasiregular in $\C\setminus E$, and with no $K$-quasiregular extension to $\C$. In other words, it was shown that there exist nonremovable sets of any dimension exceeding $d$. In \cite{C3}, Problem 3.7 states: Is there some compact set $E$ of dimension $d$, nonremovable for $\alpha$-H\"older continuous $K$-quasiregular mappings? In this paper we construct such a set $E$, which even satisfies $0< \H^d(E) < \infty$. Here we state our result.

\begin{thm}\label{MainTheoremNonRemovableSetsAtCriticalDimension}
Let $\alpha \in (0,1)$ and $K\geq 1$. If $d=2\frac{1+\alpha K}{1+K}$, then there exists a compact set $E \subset \C$ with $0< \H^d(E) < \infty$, nonremovable for $\alpha$-H\"older continuous $K$-quasiregular mappings. 
\end{thm}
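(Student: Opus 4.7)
The plan is to carry out an Astala--Uy-style Cantor construction calibrated exactly to the critical dimension. I would build $E$ as a generalized $2\times 2$ corner Cantor set inside $Q_0=[0,1]^2$: at generation $n$, replace each surviving square by four corner subsquares with side-ratio $\rho_n\in(0,1/2)$. In parallel, fix a target sequence $\sigma_n\in(0,1/2)$ with $\sigma_n\uparrow 1/2$ slowly enough that the set $E'$ produced by the analogous recipe with ratios $\sigma_n$ has positive planar Lebesgue measure. The pair $(\rho_n,\sigma_n)$ is the main design parameter of the proof; everything else is a check.

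I would then construct $\phi:\C\to\C$ iteratively, by prescribing at generation $n$ a ``building block'' homeomorphism $\psi_n$ that maps the annular region between a parent square and its four corner children (ratio $\rho_n$) onto the analogous annulus with ratio $\sigma_n$, identically on the outer boundary and affinely on the four inner boundaries. Glueing the $\psi_n$ across scales and extending by the identity outside $Q_0$ produces a homeomorphism $\phi$ of $\C$ with $\phi(E)=E'$. The $\psi_n$ should be taken as near-extremal Gr\"otzsch--Teichm\"uller quasiconformal maps on the four-holed square. Two competing numerical constraints drive the construction: each $\psi_n$ must be $K$-quasiconformal, which bounds the modulus ratio of the source and target four-holed squares from above in terms of $K$; and the scale-by-scale stretching must be compatible with the $\alpha$-H\"older inequality $\prod_{k\leq n}\sigma_k\leq C\left(\prod_{k\leq n}\rho_k\right)^\alpha$. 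Balancing these two requirements is possible exactly on the critical relation $d=2(1+\alpha K)/(1+K)$, and the $\rho_n$ can then be further tuned so that the mass distribution $\mu$ assigning $4^{-n}$ to each generation-$n$ square of $E$ satisfies both $\H^d(E)<\infty$ (via its natural covering) and the Frostman condition $\mu(B_r)\lesssim r^d$, yielding $0<\H^d(E)<\infty$.

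With $\phi$ and $E'$ in hand, nonremovability becomes standard. The Cauchy transform
$$g(w):=\int_{E'}\frac{dm(\zeta)}{\zeta-w}$$
is bounded and continuous on $\C$, holomorphic on $\C\setminus E'$, vanishes at infinity, and is non-constant because $|E'|>0$. Set $f:=g\circ\phi$. Then $f$ is bounded, non-constant, belongs to $\Lip_\alpha(\C)$ (composition of a Lipschitz function with an $\alpha$-H\"older one), and is $K$-quasiregular on $\C\setminus E$. If $f$ admitted a $K$-quasiregular extension to $\C$, then Stoilow factorization would write it as $h\circ\widetilde\psi$ with $\widetilde\psi$ a $K$-quasiconformal self-homeomorphism of $\C$ and $h$ entire; boundedness of $f$ would then force $h$ constant, contradicting non-constancy of $f$.

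The main obstacle is the quasiconformal construction. At strictly larger dimensions $t>d$, as in \cite{C2}, one has slack in the parameters and can tolerate slightly sub-optimal building blocks; at the critical dimension $d$ there is no slack, and each $\psi_n$ must be simultaneously extremal for both the dilatation and the H\"older constraint. In particular, the H\"older exponent of $\phi$ must strictly exceed Mori's generic bound $1/K$, which rules out the naive radial stretch $z\mapsto z|z|^{1/K-1}$ and requires the ``precise'' planar model announced in the abstract. Ensuring that the Beltrami coefficients of $\psi_n$ stay uniformly bounded by $(K-1)/(K+1)$ as $n\to\infty$, while the stretching ratios $\sigma_n/\rho_n$ remain exactly on the critical curve, is the quantitative core of the proof.
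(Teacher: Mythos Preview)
Your overall architecture (build $E$ and a $K$-quasiconformal $\phi$, take a Cauchy transform on the target, compose) matches the paper, but the specific plan has a fatal numerical obstruction.

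You want $\phi$ to be $K$-quasiconformal and to send $E$, with $\dim E = d = 2(1+\alpha K)/(1+K)$, onto a set $E'$ of \emph{positive area}. This is impossible by Astala's distortion bound \eqref{dimdist}--\eqref{MaximumStretchingFormulaForTPrime}: any $K$-quasiconformal image of $E$ has dimension at most
\[
d' \;=\; \frac{2Kd}{2+(K-1)d} \;=\; \frac{2(1+\alpha K)}{2+\alpha(K-1)},
\]
and $d'<2$ whenever $\alpha<1$. No choice of building blocks $\psi_n$, however extremal, can beat this; the ``balancing'' you describe cannot be achieved with target ratios $\sigma_n\uparrow \tfrac12$. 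Since your map is built from $K$-quasiconformal annular pieces glued across a null set $E$, the limit is globally $K$-quasiconformal and Astala applies. (A secondary issue: even if $|E'|>0$ were attainable, the Cauchy transform of $\chi_{E'}\,dm$ is only in the Zygmund class, not Lipschitz, so $g\circ\phi$ would be $\gamma$-H\"older for every $\gamma<\alpha$ but not $\alpha$-H\"older.)

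The paper's route differs precisely here. It targets dimension $d'$, not $2$: one takes the extremal construction of Theorem~\ref{TheoremConstructionForUsualHausdorffMeasures} so that $0<\H^{d'}(\phi(E))<\infty$, and the Cauchy transform of a Frostman measure of growth $d'$ on $\phi(E)$ is $(d'-1)$-H\"older. The crucial new step is Theorem~\ref{TheoremYieldingHolderExponentPhi}, showing that this particular $\phi$ is $\tfrac{d}{d'}$-H\"older (strictly better than Mori's $1/K$); then $(d'-1)\cdot\tfrac{d}{d'}=\alpha$ exactly. You are right that the H\"older exponent must exceed $1/K$, but the paper obtains this not by replacing the radial stretch $z\mapsto z|z|^{1/K-1}$ with an exotic block---the building blocks \emph{are} radial stretches---but by a careful Jacobian estimate across the (highly nonregular) Cantor hierarchy. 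That estimate, not a new local model, is where the work lies.
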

\noindent
We want to remark that the above Theorem extends for $K > 1$
the results of Dol\v zenko in \cite{Do} about nonremovable sets for analytic functions in $\Lip_\alpha(\C)$. \\
\\
Let us first have a look at the case $K=1$. Given a compact set $E$ with $\H^{1+\alpha}(E)>0$, by Frostman's Lemma (see for instance \cite[p.112]{M}), there exists a positive Radon measure $\nu$ supported on~$E$, such that $\nu(D(z,r))\leq C\,r^{1+\alpha}$ for any $z\in E$, where $D(z,r)$ is the disk of center $z$ and radius $r$. Thus, the function $h=\frac{1}{\pi z}\ast \nu$ is $\alpha$-H\"older continuous everywhere, holomorphic outside the support of $\nu$ and has no entire extension. From here onwards, $K>1$ unless we specify otherwise. \\
\\
Another similar situation is found in the limiting case $\alpha=0$, in which $\Lip_\alpha(\C)$ should be replaced by~$BMO(\C)$. In this case, a set $E$ is called removable for $BMO$ $K$-quasiregular mappings if every $BMO(\C)$ function $f$, $K$-quasiregular on $\C\setminus E$, is actually $K$-quasiregular on the whole plane. When $K=1$, Kaufman \cite{K} and Kr\'{a}l \cite{Kr} characterized these sets as those with zero length. When $K>1$, it is known (\cite{ACMOU}, \cite{AIM}) that sets with $\H^\frac{2}{K+1}(E)=0$ are removable for $BMO$ $K$-quasiregular mappings. In fact, the appearance of this index $\frac{2}{K+1}$ is not strange. In \cite{A}, Astala showed that for any $K$-quasiconformal mapping $\phi$ and any compact set $E$,
\begin{equation}\label{dimdist}
\frac{1}{K}\left(\frac{1}{\dim(E)}-\frac{1}{2}\right)\leq \frac{1}{\dim(\phi(E))}-\frac{1}{2}\leq K\left(\frac{1}{\dim(E)}-\frac{1}{2}\right).
\end{equation}
Furthermore, both equalities are always attainable, so that if $\dim(E) =t$, then
\begin{equation}\label{MaximumStretchingFormulaForTPrime}
\dim(\phi(E)) \leq t'=\frac{2Kt}{2+(K-1)t}.
\end{equation}
In particular, sets of dimension $\frac{2}{K+1}$ are $K$-quasiconformally mapped to sets of dimension at most $1$, which is the critical point for the analytic $BMO$ situation. Therefore, from equality at \eqref{dimdist}, there exists for any $t>\frac{2}{K+1}$ a compact set $E$ of dimension $t$ and a $K$-quasiconformal mapping $\phi$ that maps $E$ to a compact set $\phi(E)$ with dimension 
$$t'=\frac{2Kt}{2+(K-1)t}>1.$$
In particular $\H^1(\phi(E))>0$. Thus by Frostman's Lemma $\phi(E)$ supports some positive Radon measure $\nu$, having linear growth. Its Cauchy transform $h=\frac{1}{\pi z}\ast\nu$ is a $BMO(\C)$ nonentire function, analytic on $\C\setminus\phi(E)$. Thus, using that $BMO$ is invariant under quasiconformal changes of coordinates \cite{R}, the composition $h\circ\phi$ shows that $E$ is non-removable for $BMO$ $K$-quasiregular mappings.\\
\\ 
Recently, it was shown by Uriarte-Tuero \cite{Ur} that equality at (\ref{dimdist}) may be attained even at the level of measures. More precisely, 
Question 4.2 in \cite{ACMOU} asked whether there exists, for every $K \geq 1$, a compact set $E$ with $0 < \H^\frac{2}{K+1}(E) < \infty$, such that $E\,$� is not removable for some $K$-quasiregular functions in $BMO(\C)$. In \cite{Ur}, the author gives an affirmative answer to this question
by building a highly non-selfsimilar and non-uniformly distributed Cantor-type set $E$ and a $K$-quasiconformal mapping $\phi$ such that 
\begin{equation}\label{uriartesth}
0 < \H^\frac{2}{K+1}(E) < \infty\hspace{1cm}\text{and}\hspace{1cm}0<\H^1(\phi(E))<\infty.
\end{equation}
From the argument above, it then follows that the set $E$ is not removable for $BMO$ $K$-quasiregular mappings, even having positive and finite $\H^\frac{2}{K+1}$ measure. \\
\\
%
%
%
%
Our plan is to repeat the above scheme, but replacing $BMO(\C)$ by $\Lip_\alpha(\C)$. That is, given $d=2\frac{1+\alpha K}{1+K}$, we will construct a compact set $E$ with $0<\H^d(E)<\infty$ and a $\Lip_\alpha(\C)$ function which is $K$-quasiregular on $\C\setminus E$ but not on $\C$.\\
\\
We will start with the construction at \cite{Ur}, to get a compact set $E$ with $0<\H^d(E)<\infty$ and a $K$-quasiconformal mapping $\phi$ such that $0< \H^{d'}(\phi(E))<\infty$, where $d'=\frac{2Kd}{2+(K-1)d}$. Notice that $d'>1$. By Frostman's Lemma, there are nonentire $\Lip_\beta(\C)$ functions with $\beta=d'-1>0$, analytic outside of $\phi(E)$, which in turn induce (by composition) $K$-quasiregular functions on $\C\setminus E$ whose H\"older continuity exponent is, a priori, $\frac{1}{K}\,\beta$, 
because general $K$-quasiconformal mappings belong to $\Lip_{1/K}(\C)$, as Mori's Theorem states. Thus, there is some loss of regularity that might be critical, since
$$\frac{\beta}{K}<\alpha.$$
To avoid these troubles, we will construct in an explicit way the mapping $\phi$. 
This concrete construction allows us to show that $\phi$ exhibits a precise exponent of H\"older continuity given by
\begin{equation}\label{FormulaDOverDPrime}
\frac{d}{d'}=\frac{1}{K}+\frac{K-1}{2K}d
\end{equation}
which is larger than the usual $\frac{1}{K}$. This regularity will be sufficient for our purposes. Notice that since $\dim(E)=d$ and $\dim(\phi(E))=d'$ it is natural to expect $\phi$ to be $\Lip_{d/d'}$. We remark two points in this argument. First, it is precisely the distortion property \eqref{uriartesth} for $\H^d$ and $\H^{d'}$ obtained in \cite{Ur} what allows us to get non removable sets at the critical dimension $d$ (and even with finite $\H^d$ measure.) Second, several technical difficulties will arise when computing the H\"older exponent of $\phi$, because of the fact that the set $E$ is highly nonregular.%
%

\noindent
In terms of notation, $A \lesssim B$ means that there exists a constant $C>0$ such that $A \leq C \; B$. The same letter $C$ in consecutive inequalities may not denote the same constant. $\mid A \mid$ is the area of $A$. If $D = D(z,r)$ is a disk of center $z$ and radius $r$, then $r(D)=r$ also denotes its radius and $\alpha D = D(z,\alpha r)$ for all $\alpha >0$. We say that a measure $\mu$ has growth $t$ if $\mu(D(z,r)) \leq C r^t$ for all $z$. If $t=1$, we say it has linear growth. \\
\\
The paper is structured as follows. In section \ref{BasicConstruction}, we recall from \cite{Ur} how to construct, for any $0<t<2$ and $K>1$, a $K$-quasiconformal mapping $\phi$ and a set $E \subset \C$ such that $0 < \H^t(E) < \infty$ and $0 < \H^{t'}(\phi(E)) < \infty$, $t'=\frac{2Kt}{2+(K-1)t}$. In section \ref{CalculationHolderExponentPhi}, we prove that this $K$-quasiconformal mapping $\phi$ is locally H\"{o}lder continuous with exponent $\frac{t}{t'}$. This section is where most of the new technical difficulties appear. In section \ref{FinalConstructionOfHolderQRMap}, 
we prove Theorem \ref{MainTheoremNonRemovableSetsAtCriticalDimension}.

\section{The basic construction}\label{BasicConstruction}

As we mentioned above, the following theorem is proved in \cite{Ur}:

\begin{thm}\label{TheoremConstructionForUsualHausdorffMeasures}
Let $K>1$. For any $0<t<2$, there exists a compact set $E$ with $0 < \H^t(E) < \infty$ and a $K$-quasiconformal mapping $\phi : \C \rightarrow \C$ such that $0 < \H^{t'}(\phi E) < \infty$, where $t'=\frac{2Kt}{2+(K-1)t}$. 
\end{thm}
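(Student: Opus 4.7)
The plan is to adapt a Cantor-type construction in the spirit of Astala \cite{A}, but with non-self-similar parameters, so as to obtain positive and finite Hausdorff measure at the critical dimensions rather than merely the correct dimensions.

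First I would build nested compact sets $E_n$ starting from the unit disk, where each $E_n$ is a disjoint union of closed disks $D_i^n$. At generation $n+1$, inside each generation-$n$ disk of radius $r_n$, I would place a packing of $M_n^2$ sub-disks of radius $\sigma_n\, r_n$, with $M_n \in \N$ and $\sigma_n M_n < 1$. Attached to this construction is a parallel sequence of image radii $r_n'$ governed by ratios $\sigma_n'$, which will encode the radii of the images under the map $\phi$ that will be built in parallel. A purely self-similar choice already yields a set of the correct dimension $t$ and an image of dimension $t'$, as in Astala's original distortion example, but typically the $\H^t$ and $\H^{t'}$ measures end up being zero or infinite. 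The novelty, following \cite{Ur}, is to let the ratios vary with $n$ so that the natural products
\begin{equation*}
M_n^2 \sigma_n^t \quad \text{and} \quad M_n^2 (\sigma_n')^{t'}
\end{equation*}
stay simultaneously bounded above and below. The relation between $\sigma_n$ and $\sigma_n'$ is prescribed by the quasiconformal distortion used locally, and an elementary verification shows that with $t'$ equal to the Astala extremal exponent of \eqref{MaximumStretchingFormulaForTPrime}, both constraints can be met while still leaving enough room to fit the $M_n^2$ children inside each parent.

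Next I would construct $\phi$ piecewise. Outside the construction tree $\phi$ is the identity. In each annulus between a generation-$n$ disk and its chosen generation-$(n+1)$ children, $\phi$ is defined (up to translation) as a radial stretching $z \mapsto c\, z\, |z|^{s-1}$ sending the inner circle of radius $\sigma_n r_n$ onto a circle of radius $\sigma_n' r_n'$. A direct computation shows each such radial map is $K$-quasiconformal with distortion determined by $s$, and the choice of $\sigma_n,\sigma_n'$ can be tuned so that the distortion is exactly $K$ at every level. Inside the children, the recipe is applied one level deeper; matching across circles is automatic, so $\phi$ extends to a global $K$-quasiconformal homeomorphism of $\C$.

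Once $E = \bigcap_n E_n$ and $\phi$ are in place, the measure estimates proceed by a Frostman-type construction: natural probability measures $\mu$ on $E$ and $\phi_*\mu$ on $\phi(E)$ are defined by assigning equal mass to every generation-$n$ disk, and the growth bounds $\mu(D(z,r)) \lesssim r^t$ and $\phi_*\mu(D(w,\rho)) \lesssim \rho^{t'}$ are read off directly from the frozen products above, giving $\H^t(E)>0$ and $\H^{t'}(\phi(E))>0$. The harder direction, finiteness, is where the main obstacle lies: arbitrary covers of $E$ and of $\phi(E)$ must be compared against the natural covers by construction disks, and since the construction is no longer self-similar, this comparison cannot be reduced to a single scale. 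Carrying this two-sided control across all generations of the non-uniform tree is the core technical content of \cite{Ur}, and my proof would follow that argument.
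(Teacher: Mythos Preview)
Your outline has the right architecture --- a Cantor set built in tandem with a $K$-quasiconformal map assembled from radial stretches in annuli --- but it misses the one idea that makes the measure-level statement work, and without it the construction collapses to $\H^t(E)=0$.

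You place, at each generation $n$, exactly $M_n^2$ children all of the \emph{same} radius $\sigma_n r_n$ inside each parent. With a single size per generation there is an absolute packing bound: a disk cannot be covered to area fraction larger than $1-\varepsilon_0$ by disjoint subdisks of equal radius (this is Lemma~\ref{FillingAreaOfDiskWithDisks}(a) in the paper). Now trace what your two constraints force. If each child carries a concentric ``protecting'' annulus of relative outer radius $\rho_n$ on which the $K$-radial stretch acts, then $\sigma_n'=\sigma_n^{1/K}\rho_n^{(K-1)/K}$, and a short computation using $t'=\frac{2Kt}{2+(K-1)t}$ shows that
\[
\frac{M_n^2(\sigma_n')^{t'}}{M_n^2\,\sigma_n^{\,t}}=\left(\frac{\rho_n^{\,2}}{\sigma_n^{\,t}}\right)^{\frac{(K-1)t}{2+(K-1)t}}.
\]
Requiring both products $\prod_n M_n^2\sigma_n^t$ and $\prod_n M_n^2(\sigma_n')^{t'}$ to remain positive and finite therefore forces $\rho_n^{\,2}/\sigma_n^{\,t}\to 1$; but then $M_n^2\sigma_n^{\,t}\sim M_n^2\rho_n^{\,2}\le 1-\varepsilon_0$, so the natural cover by generation-$N$ disks gives
\[
\H^t(E)\le \prod_{n=1}^{N}M_n^2\sigma_n^{\,t}\longrightarrow 0.
\]
Letting the parameters vary \emph{across} generations, as you propose, does not escape this; the obstruction is the equal-radius packing bound at each fixed level.

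The paper's construction (following \cite{Ur}) gets around exactly this point by allowing, within a \emph{single} generation, children of many different radii $R_{N,1},\dots,R_{N,l_N}$, so that the protecting disks fill area fraction $1-\varepsilon_N$ with $\varepsilon_N\to 0$ (Lemma~\ref{FillingAreaOfDiskWithDisks}(b)). The key parameter choice $(\sigma_{N,j})^{tK}=(R_{N,j})^{2-t}$ then makes $(\sigma^K R)^t=(\sigma R)^{t'}=R^2$, so both Hausdorff sums reduce to the area covered, and $\prod_N(1-\varepsilon_N)>0$ gives $0<\H^t(E)<\infty$ and $0<\H^{t'}(\phi E)<\infty$. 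This intra-generation variability is the crucial mechanism you are missing, and it is also what creates the genuine technical difficulties in the Frostman/finiteness arguments that you defer to \cite{Ur}.
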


For the convenience of the reader, 
we recall from \cite{Ur} the main ideas of the proof.

\begin{proof} (Sketch of proof of Theorem \ref{TheoremConstructionForUsualHausdorffMeasures}.)

We will construct the $K$-quasiconformal mapping $\phi$ as the limit of a sequence $\phi_N$ of $K$-quasiconformal mappings, and $E$ will be a Cantor-type set. To reach the optimal estimates we need to change, at every step in the construction of $E$, both the size and the number  $m_j$ of the generating disks. However, this change is made not only from one step to the next, as in \cite{ACMOU}, but also within the same step of the construction.\\
\\
\noindent
It is instructive to recall the following elementary Lemma in \cite{Ur}, which we prove for the reader's convenience. 

\begin{lemma}\label{FillingAreaOfDiskWithDisks}
Let $\D = \{z \in \C : |z|<1 \}$.
\begin{enumerate}
\item[(a)] There exists an absolute constant $\varepsilon_0 >0$ such that for any $0<R<1$, and any collection of disks $D_j \subset \D$ with disjoint interiors, with radii $r_j=R$, $\mid \cup_j D_j  \mid < (1-\varepsilon_0) \mid \D \mid$.
\item[(b)] For any $\varepsilon >0$, $\delta >0$, there exists a finite collection of disks $D_j \subset \D$ with radii $0< r_j < \delta$ with disjoint interiors (or even disjoint closures), such that $\mid \cup_j D_j  \mid > (1-\varepsilon) \mid \D \mid$.
\end{enumerate}
\end{lemma}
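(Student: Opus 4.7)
For part (a), the plan is to exploit the classical density bound for congruent disk packings. Given disjoint disks $D_j=D(z_j,R)\subset\D$, the separation $|z_i-z_j|\geq 2R$ lets me assign to each center its planar Voronoi cell $V_j$, which automatically contains $D(z_j,R)$. A convex region containing a disk of radius $R$ has area at least $2\sqrt{3}\,R^2$ (the regular hexagon being the minimizer), so the local density in each cell is at most $\pi/(2\sqrt{3})$. Summing over $j$ and absorbing the boundary loss into a uniform factor gives $|\cup_j D_j|\leq\alpha\,|\D|$ with $\alpha<1$ an absolute constant, hence $\varepsilon_0:=1-\alpha$ works. Equivalently, one can cite Thue's theorem, which asserts exactly that the maximum density of a packing of congruent disks in $\C$ is $\pi/(2\sqrt{3})<1$.

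For part (b), the plan is a direct application of Vitali's covering theorem. The family of all closed disks $\overline{D}(z,r)\subset\D$ with $r<\delta$ is a Vitali cover of $\D$, so one extracts a countable pairwise disjoint subfamily $\{\overline{D}_j\}_{j\geq 1}$ with $|\D\setminus\bigcup_j\overline{D}_j|=0$. Truncating to a sufficiently large index $N$ yields $|\bigcup_{j=1}^N D_j|>(1-\varepsilon)|\D|$. Since the Vitali extraction produces \emph{disjoint} closed balls (and not merely disjoint interiors), the strengthened ``disjoint closures'' variant follows for free; alternatively, a final uniform dilation by a factor $1-\eta$ with $\eta$ small preserves the area bound.

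The main technical subtlety I anticipate is in (a): peripheral Voronoi cells in $\D$ can be truncated by $\partial\D$, and the hexagonal lower bound on cell area then fails near the boundary. The standard remedy is to enlarge the ambient disk to $D(0,1+R)$ before partitioning, so that every cell $V_j\cap D(0,1+R)$ still contains its full hexagonal minimum; the resulting multiplicative correction $(1+R)^2$ is bounded, and the extracted $\varepsilon_0$ remains uniform as long as $R$ is bounded away from $1$ (which is the only regime in which the estimate is used in the construction). Part (b) presents no real difficulty beyond a routine invocation of Vitali.
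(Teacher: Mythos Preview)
Your argument for (a) contains a genuine error: the assertion that ``a convex region containing a disk of radius $R$ has area at least $2\sqrt{3}\,R^2$'' is false --- the disk itself is convex, contains itself, and has area $\pi R^2 < 2\sqrt{3}\,R^2$. Even restricting to Voronoi cells (which are polygons circumscribing $D(z_j,R)$), a cell with many sides can have area arbitrarily close to $\pi R^2$. What is true is that the \emph{average} cell area in such a decomposition is at least $2\sqrt{3}\,R^2$, but establishing this is essentially the full content of Thue's theorem, so your Voronoi sketch does not prove it; your fallback ``cite Thue'' is the honest move. You are, however, entirely right to flag the failure of uniformity as $R\to 1$: a single disk of radius $R$ gives area ratio $R^2\to 1$, so part (a) as stated cannot hold with an absolute $\varepsilon_0$ over the whole range $0<R<1$. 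The paper glosses over this, and your observation that only small $R$ matters for the construction is the correct resolution.

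The paper's own approach to (a) is different and more elementary in spirit: it just notes that between any three mutually tangent equal disks of radius $R$ one can inscribe a further disk of radius $cR$ with $c$ absolute, so equal-radius packings always waste a definite proportion of area. This is also a heuristic rather than a complete proof (it neither aggregates the gaps rigorously nor handles large $R$), but it avoids invoking Thue as a black box. Your route gives the sharp constant $\pi/(2\sqrt{3})$; the paper's route is self-contained.

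For (b) your Vitali argument is correct, and the paper explicitly acknowledges that Vitali suffices. It nevertheless gives a direct construction --- cover the open set by a fine square mesh, inscribe a disk in each retained square (capturing more than half its area), and iterate on the residual so that the leftover area decays geometrically --- because, as the paper says, elements of that explicit construction are reused later. Your Vitali extraction is a clean substitute if one does not need those details.
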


\begin{proof}
Part (a) follows readily from the observation that given any 3 pairwise tangent disks $D_1, D_2, D_3$ with the same radius $R$, in the space they leave between them (i.e. in the bounded component of $\displaystyle \C \setminus \bigcup_{j=1}^{3} D_j$) one can fit another disk $B$, tangent to $D_1, D_2$ and $D_3$, with radius $cR$, where $c$ is an absolute constant independent of $R$.\\
\\
Part (b) follows from Vitali's covering theorem, but we will prove it directly since we will later use some elements from the proof. Given a bounded open set $\Omega$, consider a mesh of squares of side $\delta$. Select those squares entirely contained in the open set, i.e. $\overline{Q_j} \subset \Omega$, say such a collection is $\{ Q_j  \}_{j=1}^{N}$. Then $\displaystyle \mid \Omega \setminus \bigcup_{j=1}^{N} Q_j \mid $ is as small as we wish if $\delta$ is sufficiently small.\\
For each $Q_j$, let $D_j$ be the largest disk inscribed inside it. (Shrink the $D_j$ slightly so that they have disjoint closures.) Then $\mid D_j \mid > \frac{1}{2} \mid Q_j \mid$.\\
Consequently, given $\Omega_0 = \D$, pick a first collection of disks $\ds \{D^1_j \}_{j=1}^{N}$ eating up at least, say, $\frac{1}{10}$ of the area of $\D$. Let $\ds \Omega_1 = \D \setminus \bigcup_{j=1}^{N} D^1_j$, which has area $< \frac{9}{10} \mid \Omega_0 \mid$. Repeat the construction in $\Omega_1$ and so on. The Lemma follows since $\left( \frac{9}{10} \right)^n \longrightarrow 0$ as $n \longrightarrow \infty$.
\end{proof}

Hence, by the above Lemma, in order to fill a very big proportion of the area of the unit disk $\D$ with smaller disks we are forced to consider disks of different radii.
This creates a number of technical complications as we will see later.

{\bf{Step 1}}. Choose first $m_{1,1}$ disjoint disks $D(z_{1,1}^i,R_{1,1}) \subset \D$, $i=1,...,m_{1,1}$, and then $m_{1,2}$ disks $D(z_{1,2}^i,R_{1,2}) \subset \D$, $i=1,...,m_{1,2}$, disjoint among themselves and with the previous ones, and then $m_{1,3}$ disks $D(z_{1,3}^i,R_{1,3}) \subset \D$, $i=1,...,m_{1,3}$, disjoint among themselves and with the previous ones, and so on up to $m_{1,l_1}$ disks $D(z_{1,l_1}^i,R_{1,l_1}) \subset \D$, $i=1,...,m_{1,l_1}$, disjoint among themselves and with the previous ones,
so that they cover a big proportion of the unit disk $\D$ (see Lemma \ref{FillingAreaOfDiskWithDisks}), say $(1-\varepsilon_1)|\D|$. Then, we have that
\begin{equation}\label{AreaCoveredInFirstStep}
c_1:=m_{1,1}\,(R_{1,1})^2 +  m_{1,2}\,(R_{1,2})^2 + ...+ m_{1,l_1}\,(R_{1,l_1})^2 = 1-\varepsilon_1 
\end{equation}
where $0< \varepsilon_1 <1$ is a very small parameter to be chosen later. By the proof of Lemma \ref{FillingAreaOfDiskWithDisks}, we can assume that all radii $R_{1,j} < \delta_1$, for $j=1, ..., l_1$, for a $\delta_1 >0$ as small as we wish.\\
Now to each $j=1,...,l_1$ we will associate a number $0<\sigma_{1,j}<\frac{1}{100}$ to be determined later. \\
%
Let  $r_{1,j}=R_{1,j}$ for $j=1, ..., l_1$. For each $i=1,\dots, m_j$, let $\varphi^i_{1,j}(z)=z^i_{1,j}+(\sigma_{1,j})^K R_{1,j}\,z$ and, using the notation $\alpha D(z,\rho):= D(z,\alpha \rho)$, set
$$\aligned
D^i_j&:=\frac{1}{(\sigma_{1,j})^K}\,\varphi^i_{1,j}(\D)=D(z^i_{1,j}, r_{1,j})\\
(D^i_j)'&:=\varphi^i_{1,j}(\D)=D(z^i_{1,j},(\sigma_{1,j})^K r_{1,j}) \subset D^i_j
\endaligned$$ 
As the first approximation of the mapping we define
$$
 g_1(z)=
\begin{cases}
(\sigma_{1,j})^{1-K}(z-z^i_{1,j})+z^i_{1,j}, &z\in (D^i_j)'\\
\left|\frac{z-z^i_{1,j}}{r_{1,j}}\right|^{\frac{1}{K}-1}(z-z^i_{1,j})+z^i_{1,j}, \; &z\in D^i_j\setminus (D^i_j)'\\
z, & z \notin \cup D^i_j
\end{cases}
$$
This is a $K$-quasiconformal mapping, conformal outside of $\ds \bigcup_{j=1}^{l_1} \bigcup_{i=1}^{m_{1,j}}(D^i_j \setminus (D^i_j)' )$. It maps each $D^i_j$ onto itself and $(D^i_j)'$ onto $(D^i_j)''=D(z^i_{1,j},\sigma_{1,j} \: r_{1,j})$, while the rest of the plane remains fixed. Write $\phi_1=g_1$. 
\\
{\bf{Step 2}}. We have already fixed $l_1, m_{1,j}, R_{1,j}, \sigma_{1,j}$ and $c_1$. Choose now $m_{2,1}$ disjoint disks $D(z_{2,1}^n,R_{2,1}) \subset \D$, $n=1,...,m_{2,1}$, and then $m_{2,2}$ disks $D(z_{2,2}^n,R_{2,2}) \subset \D$, $n=1,...,m_{2,2}$, disjoint among themselves and with the previous ones (within this second step), and then $m_{2,3}$ disks $D(z_{2,3}^n,R_{2,3}) \subset \D$, $n=1,...,m_{2,3}$, disjoint among themselves and with the previous ones (within this second step), and so on up to $m_{2,l_2}$ disks $D(z_{2,l_2}^n,R_{2,l_2}) \subset \D$, $n=1,...,m_{2,l_2}$, disjoint among themselves and with the previous ones (within this second step), so that they cover a big proportion of the unit disk $\D$, for instance $(1-\varepsilon_2)|\D|$ (again by Lemma \ref{FillingAreaOfDiskWithDisks}.) Then, we have that
\begin{equation}\label{AreaCoveredInSecondStep}
c_2:=m_{2,1}\,(R_{2,1})^2 +  m_{2,2}\,(R_{2,2})^2 + ...+ m_{2,l_2}\,(R_{2,l_2})^2 = 1-\varepsilon_2 
\end{equation}
and $0< \varepsilon_2 <1$ will be chosen later. As in the previous step, we can assume that all radii $R_{2,k} < \delta_2$, for $k=1, ..., l_2$, for a $\delta_2 >0$ as small as we wish.\\
\\
Repeating the above procedure, consider now the parameters $\sigma_{2,k} >0$, which we will associate to each one of the disks $D(z_{2,k}^n,R_{2,k})$, with $k=1, ..., l_2$, and all possible values of $n$. We associate the same parameter $\sigma_{2,k}$ to all the disks of the form $D(z_{2,k}^n,R_{2,k})$ (so $\sigma_{2,k}$ does not depend on $n$.) The parameters $\sigma_{2,k}$ will be chosen later, and they will all be small, say $\sigma_{2,k} < \frac{1}{100}$ for $k=1, ..., l_2$.\\
Denote $r_{\{2,k\},\{1,j\}}=R_{2,k}\,\sigma_{1,j} \: r_{1,j}$ and $\varphi^n_{2,k}(z)=z^n_{2,k}+(\sigma_{2,k})^K R_{2,k}\,\,z$, \, and define the auxiliary disks
$$\aligned
D_{j,k}^{i,n}=\phi_1\left(\frac{1}{(\sigma_{2,k})^K}\, \varphi^{i}_{1,j} \circ \varphi^{n}_{2,k}(\D)\right)=D(z^{i,n}_{j,k}, r_{\{2,k\},\{1,j\}})\\
(D_{j,k}^{i,n})'=\phi_1\left(\, \varphi^{i}_{1,j} \circ \varphi^{n}_{2,k}(\D)\right)=D(z^{i,n}_{j,k}\, , (\sigma_{2,k})^K r_{\{2,k\},\{1,j\}})
\endaligned$$
for certain $z^{i,n}_{j,k} \in \D$, where $i=1,\dots,m_{1,j}$, $n=1,\dots,m_{2,k}$, $j=1,\dots,l_1$ and $k=1,\dots,l_2$. Now let
$$
g_2(z)=
\begin{cases}
(\sigma_{2,k})^{1-K}(z-z^{i,n}_{j,k})+z^{i,n}_{j,k}&z\in (D_{j,k}^{i,n})'\\
\left|\frac{z-z^{i,n}_{j,k}}{r_{\{2,k\},\{1,j\}}}\right|^{\frac{1}{K}-1}(z-z^{i,n}_{j,k})+z^{i,n}_{j,k}&z\in D_{j,k}^{i,n}\setminus (D_{j,k}^{i,n})'\\
z&\text{otherwise}
\end{cases}
$$
Clearly, $g_2$ is $K$-quasiconformal, conformal outside of $\ds \bigcup_{i,j,k,n} \left( D_{j,k}^{i,n} \setminus (D_{j,k}^{i,n})' \right)$, maps each $D_{j,k}^{i,n}$ onto itself and $(D_{j,k}^{i,n})'$ onto $(D_{j,k}^{i,n})''=D(z^{i,n}_{j,k},\: \sigma_{2,k}\: r_{\{2,k\},\{1,j\}})$, while the rest of the plane remains fixed. 
Define $\phi_2=g_2\circ\phi_1$.\\


In the picture below the size of the parameters $\sigma$ has been greatly magnified for the convenience of the reader (so that e.g. the annuli $D_{j}^{i}\setminus (D_{j}^{i})'$ and their images under $\phi$ are much thinner in the picture than in the proof.)



\begin{figure}[ht]
\begin{center}
\includegraphics{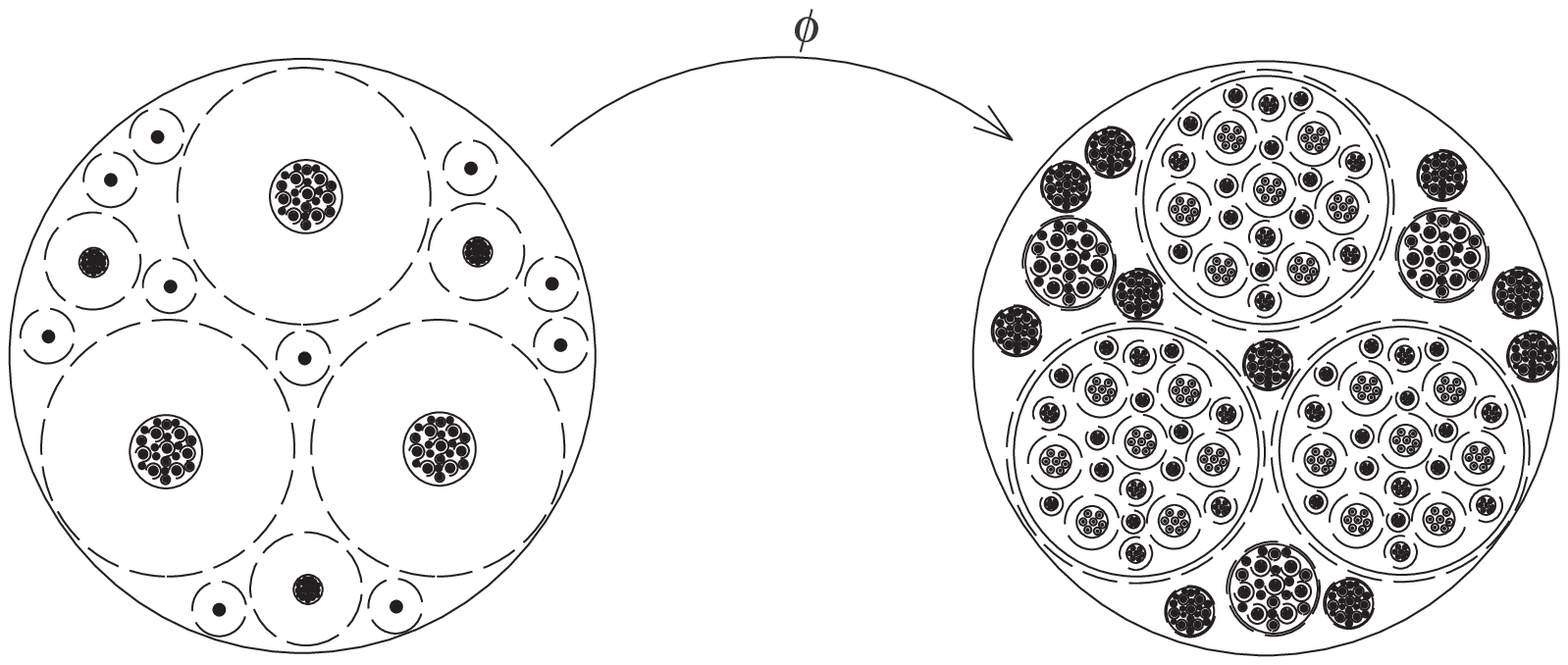}
\end{center}
\end{figure}

\noindent  
{\bf{The induction step}}. After step $N-1$ we take $m_{N,1}$ disjoint disks $D(z_{N,1}^q,R_{N,1}) \subset \D$, $q=1,...,m_{N,1}$, and then $m_{N,2}$ disks $D(z_{N,2}^q,R_{N,2}) \subset \D$, $q=1,...,m_{N,2}$, disjoint among themselves and with the previous ones (within this $N^{th}$ step), and then $m_{N,3}$ disks $D(z_{N,3}^q,R_{N,3}) \subset \D$, $q=1,...,m_{N,3}$, disjoint among themselves and with the previous ones (within this $N^{th}$ step), and so on up to $m_{N,l_N}$ disks $D(z_{N,l_N}^q,R_{N,l_N}) \subset \D$, $q=1,...,m_{N,l_N}$, disjoint among themselves and with the previous ones (within this $N^{th}$ step), so that they cover a big proportion of the unit disk $\D$. Then, we have that
\begin{equation}\label{AreaCoveredInNthStep}
c_N:=m_{N,1}\,(R_{N,1})^2 +  m_{N,2}\,(R_{N,2})^2 + ...+ m_{N,l_N}\,(R_{N,l_N})^2 = 1-\varepsilon_N 
\end{equation}
where $0< \varepsilon_N <1$ is a very small parameter to be chosen later. Again, we can assume that all the radii $R_{N,p} < \delta_N$, for $p=1, ..., l_N$, and for a $\delta_N >0$ as small as we wish.\\
Repeating the above procedure, consider now the parameters $\sigma_{N,p} >0$, which we will associate to each one of the disks $D(z_{N,p}^q,R_{N,p})$, with $p=1, ..., l_N$, and all possible values of $q$. We associate the same parameter $\sigma_{N,p}$ to all the disks of the form $D(z_{N,p}^q,R_{N,p})$ (so the parameter $\sigma_{N,p}$ does not depend on $q$.) The parameters $\sigma_{N,p}$ will be chosen later, and they will all be quite small, say $\sigma_{N,p} < \frac{1}{100}$ for $p=1, ..., l_N$.\\
Denote then 
$r_{ \{N,p\}, \{N-1,h\}, \ldots , \{2,k\}, \{1,j\} }=R_{N,p}\,\,\sigma_{N-1,h}\,\, 
r_{ \{N-1,h\}, \ldots , \{2,k\}, \{1,j\} }$, and $\varphi^q_{N,p}(z)=z^q_{N,p}+ (\sigma_{N,p})^K \, R_{N,p}\,z$. For any multiindexes $I=(i_1,...,i_N)$ and  $J=(j_1,...,j_N)$, where $1\leq i_k\leq m_{k,j_k}$, $1\leq j_k\leq l_k$, and $k=1,...,N$, let 
\begin{equation}\label{FormulaDIJAndDIJPrime}
\aligned
D^{I}_{J}=\phi_{N-1}\left(\frac{1}{(\sigma_{N,p})^K}\, \varphi^{i_1}_{1,j_1} \circ \dots \circ \varphi^{i_N}_{N,j_N}(\D) \right) = D\left(z^{I}_{J}, r_{ \{N,p\}, \{N-1,h\}, \ldots , \{2,k\}, \{1,j\} } \right)\\
(D^{I}_{J})'=\phi_{N-1}\left( \varphi^{i_1}_{1,j_1}  \circ \dots \circ \varphi^{i_N}_{N,j_N}(\D) \right) = D\left(z^{I}_{J}, (\sigma_{N,p})^K \, r_{ \{N,p\}, \{N-1,h\}, \ldots , \{2,k\}, \{1,j\} }\right)
\endaligned
\end{equation}
and let
\begin{equation}\label{FormulaGN}
g_N(z)=
\begin{cases}
(\sigma_{N,p})^{1-K}(z-z^{I}_{J})+z^{I}_{J}&z\in (D^{I}_{J})'\\
\left|\frac{z-z^{I}_{J}}{r_{ \{N,p\}, \{N-1,h\}, \ldots , \{2,k\}, \{1,j\} }}\right|^{\frac{1}{K}-1}(z-z^{I}_{J})+z^{I}_{J}&z\in D^{I}_{J}\setminus (D^{I}_{J})'\\
z&\text{otherwise}
\end{cases}
\end{equation}
Clearly, $g_N$ is $K$-quasiconformal, conformal outside of $\ds \bigcup_{\substack{I=(i_1,...,i_N)\\J=(j_1,...,j_N)}} \left( \, D^{I}_{J}\setminus (D^{I}_{J})' \, \right)$, maps $D^{I}_{J}$ onto itself and $(D^{I}_{J})'$ onto $(D^{I}_{J})''=D \left( z^{I}_{J},\, \sigma_{N,p} \, \, r_{ \{N,p\}, \{N-1,h\}, \ldots , \{2,k\}, \{1,j\} } \right)$, while the rest of the plane remains fixed. 
Now define $\phi_N=g_N\circ\phi_{N-1}$.\\  
\\
Since each $\phi_N$ is $K$-quasiconformal and  equals the identity  outside the unit disk $\D$, there exists a limit $K$-quasiconformal mapping  
$$\phi=\lim_{N\to\infty}\phi_N$$
with convergence in $W^{1,p}_{loc}(\C)$ for any $p<\frac{2K}{K-1}$. On the other hand, $\phi$ maps the compact set 
\begin{equation}\label{EquationForCompactSetEInSource}
E=\displaystyle\bigcap_{N=1}^\infty\left(
\ds \bigcup_{\substack{i_1,...,i_N \\ j_1,...,j_N}}
\varphi^{i_1}_{1,j_1}  \circ \dots \circ \varphi^{i_N}_{N,j_N} \left( \, \overline{\D} \, \right)
\right)
\end{equation}
to the compact set
\begin{equation}\label{EquationForCompactSetPhiEInTarget}
\phi(E)=\bigcap_{N=1}^\infty\left(
\ds \bigcup_{\substack{i_1,...,i_N \\ j_1,...,j_N}}
\psi^{i_1}_{1,j_1}  \circ \dots \circ \psi^{i_N}_{N,j_N} \left( \, \overline{\D} \, \right)
\right)
\end{equation}
where we have written $\psi^{i_k}_{k,j_k}(z)=z^{i_k}_{k,j_k} + \sigma_{k,j_k} \, R_{k,j_k} \, z$, and where $1\leq i_k\leq m_{k,j_k}$, $1\leq j_k\leq l_k$, and $k \in \N$. \\
\\
Notice that with this notation, a building block in the $N^{th}$ step of the construction of $E$ (i.e. a set of the type $\varphi^{i_1}_{1,j_1}  \circ \dots \circ \varphi^{i_N}_{N,j_N} \left( \, \overline{\D} \, \right)$) is a disk with radius given by 
\begin{equation}\label{RadiusSourceNthStep}
s_{j_1,...,j_N}=\left( (\sigma_{1,j_1})^K \, R_{1,j_1} \right) \dots \left( (\sigma_{N,j_N})^K R_{N,j_N} \right) 
\end{equation}
and a building block in the $N^{th}$ step of the construction of $\phi(E)$ (i.e. a set of the type $\psi^{i_1}_{1,j_1}  \circ \dots \circ \psi^{i_N}_{N,j_N} \left( \, \overline{\D} \, \right)$) is a disk with radius given by 
\begin{equation}\label{RadiusTargetNthStep}
t_{j_1,...,j_N}=\left( \sigma_{1,j_1} \, R_{1,j_1} \right) \dots \left( \sigma_{N,j_N} \, R_{N,j_N} \right) .
\end{equation}
As is explained in \cite{Ur}, the key now is the right choice of parameters. So we choose $\sigma_{k,j_k}$ satisfying
\begin{equation}\label{ChoiceOfSigmaAsFunctionOfR}
(\sigma_{k,j_k})^{tK} = (R_{k,j_k})^{2-t}
\end{equation}
for all possible values of $k$ and $j_k$. 
The choice \eqref{ChoiceOfSigmaAsFunctionOfR} actually has some geometric meaning related to area. Namely, forgetting about subindexes, 
\begin{equation}\label{SimplificationOfTargetAndSourceEquationsToArea}
\left( \sigma^K \, R \right)^t = \left( \sigma \, R \right)^{ \frac{2Kt}{2+(K-1)t} } = \left( \sigma \, R \right)^{t'} = R^2.
\end{equation}
which is helpful when dealing with the sums involved in the calculations of $\H^t(E)$ and of $\H^{t'}(\phi(E))$ (i.e. sums of the type $\sum \left( s_{j_1,...,j_N}  \right)^t$ and $\sum \left( t_{j_1,...,j_N}   \right)^{t'}$, respectively.) \\
As in \cite{Ur}, we choose $\varepsilon_n \rightarrow 0$ so fast that 
\begin{equation}\label{ChoiceOfVarepsilonSoThatProductOfAreasConverges}
\prod_{n=1}^{\infty} \left( 1- \varepsilon_n \right) \approx 1 .
\end{equation}
With such a choice of parameters, it is proved in \cite{Ur} that $\phi$ is $K$-quasiconformal and that
\begin{equation}\label{UpperAndLowerBoundsForHausdorffMeasureInSourceAndTarget}
0 < \H^{t}(E) < \infty \, \, \text{ and that } \, \, 
0 < \H^{t'}(\phi(E))  < \infty . 
\end{equation}
This finishes the sketch of proof of Theorem \ref{TheoremConstructionForUsualHausdorffMeasures}.
\end{proof}

Let us make some remarks which will be useful later.\\
\\
Fix a building block $D$ at scale $N-1$ for $E$, i.e. let $D = \varphi^{i_1}_{1,j_1}  \circ \dots \circ \varphi^{i_{N-1}}_{N-1,j_{N-1}} \left( \, \overline{\D} \, \right)$ for some choice of $i_k$ and $j_k$, $1 \leq k \leq N-1$. As usual, the {\it{children}} of $D$ are the building blocks at scale $N$ contained in $D$, that is, the disks of the form 
$$D' = \varphi^{i_1}_{1,j_1}  \circ \dots \circ \varphi^{i_{N-1}}_{N-1,j_{N-1}} \circ \varphi^{i_{N}}_{N,j_{N}}  \left( \, \overline{\D} \, \right),$$ 
for any choice of $i_{N}$ and $j_{N}$, but with the same choices of $i_k$ and $j_k$ for $1 \leq k \leq N-1$ as for $D$. The genealogical terminology (parents, cousins, descendants, generation, etc.) has the obvious meaning in this context.\\
\\
For any multiindexes $I=(i_1,...,i_N)$ and  $J=(j_1,...,j_N)$, where $1\leq i_k\leq m_{k,j_k}$, $1\leq j_k\leq l_k$, and $k=1,...,N$, we will denote by
\begin{equation}\label{DefinitionProtectingDisk}
P^{N}_{I;J} = \frac{1}{(\sigma_{N,j_N})^K}\, \varphi^{i_1}_{1,j_1} \circ \dots \circ \varphi^{i_N}_{N,j_N}(\D)
\end{equation}
a {\it{protecting}} disk of generation $N$. Then, $P^N_{I;J}$ has radius 
$$r(P^{N}_{I;J}) = \frac{1}{\left( \sigma_{N,j_N} \right)^K} s_{j_1,...,j_N}=\left( \sigma_{1,j_1} \, \dots \sigma_{N-1,j_{N-1}} \right)^K  \left(   R_{1,j_1} \dots   R_{N,j_N} \right).$$ 
Analogously, we will write
\begin{equation}\label{DefinitionGeneratingDisk}
G^{N}_{I;J} = \varphi^{i_1}_{1,j_1}  \circ \dots \circ \varphi^{i_N}_{N,j_N}(\D)
\end{equation}
in order to denote a {\it{generating}} disk of generation $N$, which has radius $$r(G^{N}_{I;J}) = s_{j_1,...,j_N}=\left( \sigma_{1,j_1} \, \dots \sigma_{N,j_N} \right)^K  \left(   R_{1,j_1} \dots   R_{N,j_N} \right).$$
With this notation, (see \eqref{FormulaDIJAndDIJPrime}), we have $ D^{I}_{J}=\phi_{N-1}\left( P^{N}_{I;J}  \right) $, $ (D^{I}_{J})'=\phi_{N-1}\left( G^{N}_{I;J}  \right) $, and $ (D^{I}_{J})''= \phi_{N} \left( G^{N}_{I;J}  \right) $. Notice that, except for the closure, the disks $G^{N}_{I;J}$ are what we called the building blocks above. 
We will also refer to the unit disk $\D$ as $G^{0}$ and $\phi_0$ will be the identity map. We will mostly refer to $G^{N}_{I;J}$ and $P^{N}_{I;J}$ as open disks (as opposed to their closure), unless the context suggests differently.

\section{The calculation of the H\"{o}lder exponent of $\phi$}\label{CalculationHolderExponentPhi}

The main purpose of this section is to prove the following result.
\begin{thm}\label{TheoremYieldingHolderExponentPhi}
The $K$-quasiconformal mapping $\phi$ from Theorem \ref{TheoremConstructionForUsualHausdorffMeasures} is locally H\"older continuous with exponent $t/t'$.
\end{thm}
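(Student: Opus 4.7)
The strategy is to exploit the piecewise structure of $\phi$ as the limit of the explicit maps $\phi_N=g_N\circ\phi_{N-1}$, combined with the key identity $(\sigma_{k,j_k}^K R_{k,j_k})^t=(\sigma_{k,j_k} R_{k,j_k})^{t'}=R_{k,j_k}^2$ coming from \eqref{ChoiceOfSigmaAsFunctionOfR}. This identity is exactly what makes $t/t'$ the natural H\"older exponent: at every generation $N$ one has $t_{j_1,\dots,j_N}=(s_{j_1,\dots,j_N})^{t/t'}$, so if the image of a source disk of radius $s$ has diameter comparable to $s^{t/t'}$, the H\"older estimate is sharp at that scale.

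The first preliminary step is structural. By induction on $N$ I would verify that $\phi_{N-1}$ restricted to each protecting disk $P^N_{I;J}$ is a similarity with ratio $\prod_{k=1}^{N-1}\sigma_{k,j_k}^{1-K}=t_{j_1,\dots,j_{N-1}}/s_{j_1,\dots,j_{N-1}}$. The induction works because $P^N_{I;J}\subset G^{N-1}_{I';J'}$ and each $g_k$ with $k\le N-1$ is linear on the image of a generating disk of level $\ge k$. A second observation is that for $M>N$ all the maps $g_M$ are supported in generating disks of level $\ge N+1$, which lie inside $G^N_{I;J}$; hence $\phi$ coincides with $\phi_N$ on the outer annulus $P^N_{I;J}\setminus G^N_{I;J}$, and for any $z\in G^N_{I;J}$ one has $\phi(z)\in(D^I_J)''$, a disk of radius $t_{j_1,\dots,j_N}$.

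The main argument proceeds by case analysis on the position of $z,w$. Let $N$ be the largest integer for which a common protecting disk $P^N_{I;J}$ contains both points (the case $z,w\notin\D$ is trivial since $\phi$ is the identity there). One of the following three situations arises: (A) both points lie in the annulus $P^N_{I;J}\setminus G^N_{I;J}$; (B) both lie in $G^N_{I;J}$, with no common $P^{N+1}$-child; (C) one is in $G^N_{I;J}$ and the other in the annulus. In case (A), $\phi=\phi_N$ at both points, so the estimate reduces to the composition of the similarity $\phi_{N-1}$ of ratio $t_{N-1}/s_{N-1}$ with the radial power map $g_N$ on $D^I_J\setminus(D^I_J)'$. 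Splitting into a local Lipschitz regime and a global H\"older-$1/K$ regime (according to whether $|z-w|$ is smaller or larger than the distance from the images to the center of $D^I_J$), the scale cancellations work out precisely because of $\sigma^{tK}=R^{2-t}$. In case (B), the similarity of $\phi_N$ on $G^N_{I;J}$ gives $|\phi_N(z)-\phi_N(w)|=(t_N/s_N)|z-w|$, while $|\phi(z)-\phi_N(z)|$ and $|\phi(w)-\phi_N(w)|$ are controlled by the diameter of the smallest target piece containing each image; the separation between distinct $P^{N+1}$-children provides the lower bound on $|z-w|$ needed to close the estimate. Case (C) combines the two mechanisms.

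The main obstacle I expect lies in the delicate cancellation in cases (A) and (C): since the radii $R_{k,j_k}$ and parameters $\sigma_{k,j_k}$ vary both across and within steps, there is no uniform self-similar scale one can rescale to, and every estimate has to be carried out symbolically in the quantities $R_{N,j_N}$, $\sigma_{N,j_N}$, $s_{j_1,\dots,j_{N-1}}$ and $t_{j_1,\dots,j_{N-1}}$. One has to verify in each sub-regime (inner vs.\ outer part of the annulus, $|z-w|$ small vs.\ close to the diameter of $P^N_{I;J}$, etc.) that the resulting bound reduces to $|z-w|^{t/t'}$ via the identity $t_{N}=s_{N}^{t/t'}$. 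The lower-bound estimates for case (B), which depend on knowing the minimum separation between distinct children produced by the packing in Lemma \ref{FillingAreaOfDiskWithDisks}, seem to be the principal source of genuinely new technical work.
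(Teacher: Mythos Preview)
Your strategy is genuinely different from the paper's. The paper does \emph{not} estimate $|\phi(z)-\phi(w)|$ pointwise; instead it reduces H\"older continuity, via the Poincar\'e inequality and quasisymmetry, to the integral Jacobian bound
\[
\int_D J(z,\phi)\,dA(z)\le C\,\diam(D)^{2t/t'}
\]
for all small disks $D$. The case analysis is then on the position of a single disk $D$ relative to the hierarchy of protecting and generating disks (four cases, according to whether $D$ meets zero, some, one, or several generators at the first nontrivial level). The key advantages are that the Jacobian measure is doubling (Lemma~\ref{ConsequenceJacobianDoublingMeasure}), so $D$ can be freely translated and dilated by bounded factors, and that $\int_{P^N} J(z,\phi)\,dA=\int_{P^N} J(z,\phi_{N-1})\,dA$ because $g_N,g_{N+1},\dots$ preserve each $P^N$ setwise. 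These two facts let the paper reduce every case to Lemma~\ref{CheckingIntegralHolderConditionForPhiNForDisksContainedAndConcentricToPN}, which handles disks concentric with a protector, without ever needing a quantitative separation between siblings.

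Your direct approach is plausible in outline, but the sentence ``the separation between distinct $P^{N+1}$-children provides the lower bound on $|z-w|$'' is where it would break. In the construction the protecting disks $P^{N+1}$ inside a given $G^N$ are packed to fill area $1-\varepsilon_{N+1}$ of $G^N$, so distinct $P^{N+1}$-children can be arbitrarily close (or even tangent); there is no uniform gap to invoke. The real mechanism in your case~(B) is not separation but the fact that $g_{N+1}$ equals the identity on $\partial D^I_J$, so $\phi_{N+1}$ glues continuously across adjacent protectors; you would then need to run the H\"older estimate for the radial map through that boundary, which forces you back into a mixture of your cases (A) and (B) one level down, with nonuniform radii. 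This recursion is exactly the difficulty the paper's integral/doubling argument is designed to sidestep. Your approach can likely be made to work, but it needs a finer subcase analysis in (B) (and in (C)) than you have sketched, and the ``separation'' heuristic should be replaced by continuity of $g_{N+1}$ at $\partial D^I_J$ together with the explicit H\"older-$1/K$ behavior of the radial map near that boundary.
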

\noindent
By the Poincar\'e inequality together with the quasiconformality of $\phi$, \cite[p.64]{G} it is enough 
to show that for any disk $D$ with, say, $\diam(D) \lesssim 1$, 
\begin{equation}\label{IntegralHolderCondition}
\int_DJ(z,\phi)\,dA(z)\leq C\,\diam(D)^{2t/t'}.
\end{equation}
In order to prove \eqref{IntegralHolderCondition}, we will need several lemmas. \\
\\
An easy consequence of quasisymmetry is that the Jacobian of a $K$-quasiconformal mapping is a doubling measure, with doubling constant only depending on $K$, i.e. $\int_DJ(z,\phi)\,dA(z) \approx \int_{2D}J(z,\phi)\,dA(z)$. A further easy consequence of this fact is the following

\begin{lemma}\label{ConsequenceJacobianDoublingMeasure}
Let $C>0$ be given. Assume that $\frac{1}{C} \leq \alpha \leq C$ and $\beta \in \C$ be such that $|\beta| \leq C$. Then, for any $K$-quasiconformal mapping $\phi$, and any disk $D$ of radius $r(D)$,
\begin{equation}\label{FormulaConsequenceJacobianDoublingMeasure}
\int_{D(a,r)}J(z,\phi)\,dA(z) \approx \int_{D(a+\beta r, \alpha r)}J(z,\phi)\,dA(z),
\end{equation}
with constants that depend only on $K$ and $C$. 
\end{lemma}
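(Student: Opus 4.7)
The plan is to translate the integral into an area estimate and exploit the doubling property stated just above the lemma. Since $\phi$ is $K$-quasiconformal, in particular $\phi\in W^{1,2}_{loc}$ with the change-of-variables formula, the set function
\[
\mu(D):=\int_D J(z,\phi)\,dA(z)=|\phi(D)|
\]
defines a Borel measure, and the sentence preceding the lemma asserts precisely that $\mu$ is doubling, i.e.\ $\mu(D(x,2\rho))\leq C_K\,\mu(D(x,\rho))$ for every disk, with a constant depending only on $K$. So nothing analytic remains; the lemma reduces to an elementary chaining argument for a doubling measure.

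First I would set $D_1=D(a,r)$ and $D_2=D(a+\beta r,\alpha r)$, and use the constraints $|\beta|\leq C$ and $1/C\leq\alpha\leq C$ to sandwich each of these disks inside a dilation of the other by a factor depending only on $C$. Concretely, $D_2\subset D(a,(|\beta|+\alpha)r)\subset 2^{M_1}D_1$ for some integer $M_1=M_1(C)$, because $|\beta|+\alpha\leq 2C$ and one can take $M_1=\lceil\log_2(2C)\rceil$. Conversely, $D_1\subset D(a+\beta r,(1+|\beta|)r)$, and since $1+|\beta|\leq (1+C)$ while $\alpha r\geq r/C$, one has $D_1\subset 2^{M_2}D_2$ for $M_2=\lceil\log_2\bigl(C(1+C)\bigr)\rceil$, again depending only on $C$.

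Iterating the doubling inequality $M_1$, respectively $M_2$, times about the centers $a$ and $a+\beta r$ then yields
\[
\mu(D_2)\leq \mu(2^{M_1}D_1)\leq C_K^{M_1}\,\mu(D_1),\qquad \mu(D_1)\leq\mu(2^{M_2}D_2)\leq C_K^{M_2}\,\mu(D_2),
\]
so $\mu(D_1)\approx\mu(D_2)$ with constants depending only on $K$ and $C$, which is exactly \eqref{FormulaConsequenceJacobianDoublingMeasure}. The slight subtlety worth flagging is that standard doubling is formulated for disks concentric with a fixed center, so one must be careful to iterate about $a$ when bounding $\mu(D_2)$ from above by $\mu(D_1)$ and about $a+\beta r$ for the reverse inequality, rather than trying to change center mid-way; once this is observed, there is no real obstacle.
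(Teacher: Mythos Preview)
Your argument is correct and follows essentially the same route as the paper: you show each disk is contained in a bounded dilate of the other (with dilation factor depending only on $C$) and then iterate the doubling inequality. The paper compresses this into two containments, $D(a+\beta r,\alpha r)\subset D(a,2Cr)$ and $D(a,r)\subset D(a+\beta r,(C^2+C)\alpha r)$, but the substance is identical.
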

\noindent
As a consequence, it will be sufficient to prove \eqref{IntegralHolderCondition} only for disks $D$ strictly included in $\D$, since $\phi$ restricted to $\C \setminus \D$ is the identity map. 
\begin{proof}
Apply the doubling condition to $D' = D(z',R') = D(a+\beta r, \alpha r) \subset D(a, 2Cr)$, and to $D(a,r) = D(z' - \frac{\beta}{\alpha}R', \frac{1}{\alpha}R') \subset D(z',\left( C^2 + C \right) R')$. 
\end{proof}

\begin{lemma}\label{JacobianOfGN}
The Jacobian of $g_N$ 
is given by
\begin{equation}\label{FormulaJacobianOfGN}
J(z,g_N)= \begin{cases}
\left( (\sigma_{N,p})^{1-K}  \right)^2  &z\in (D^{I}_{J})'\\
\frac{1}{K}  \left|\frac{z-z^{I}_{J}}{ \left( \sigma_{1,j} \ldots  \sigma_{N-1,h} \right) \left( R_{1,j} \ldots  R_{N,p} \right) }\right|^{ 2\left( \frac{1}{K}-1 \right) }  
&z\in D^{I}_{J}\setminus (D^{I}_{J})'\\
1&\text{otherwise}
\end{cases}
\end{equation}
\end{lemma}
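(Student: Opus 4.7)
The lemma is a direct piecewise computation: on each of the three regions the map $g_N$ has a simple explicit form, so we just compute $J(z,g_N)=|\partial g_N|^2-|\bar\partial g_N|^2$ separately.

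\textbf{Region 1: $z\in (D^I_J)'$.} Here $g_N(z)=(\sigma_{N,p})^{1-K}(z-z^I_J)+z^I_J$ is an affine map (a pure dilation composed with a translation), which is conformal. Thus $\bar\partial g_N=0$ and $\partial g_N=(\sigma_{N,p})^{1-K}$, giving $J(z,g_N)=\bigl((\sigma_{N,p})^{1-K}\bigr)^{2}$ immediately.

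\textbf{Region 3: $z\notin\bigcup D^I_J$.} Here $g_N(z)=z$, so $J(z,g_N)=1$.

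\textbf{Region 2: the annulus $z\in D^I_J\setminus (D^I_J)'$.} This is the only calculation with content. Writing $\zeta=z-z^I_J$ and $r=r_{\{N,p\},\ldots,\{1,j\}}$, one has $g_N(z)-z^I_J=r^{1-1/K}\,h(\zeta)$ where $h(\zeta)=|\zeta|^{1/K-1}\zeta=(\zeta\bar\zeta)^{(1/K-1)/2}\zeta$. Using the Wirtinger derivatives directly,
\begin{align*}
\partial h &= \tfrac{1/K-1}{2}|\zeta|^{1/K-3}\bar\zeta\cdot\zeta+|\zeta|^{1/K-1}=\tfrac{1/K+1}{2}|\zeta|^{1/K-1},\\
\bar\partial h &= \tfrac{1/K-1}{2}|\zeta|^{1/K-3}\zeta^{2},
\end{align*}
so $|\partial h|^{2}-|\bar\partial h|^{2}=\tfrac{(1/K+1)^{2}-(1-1/K)^{2}}{4}|\zeta|^{2/K-2}=\tfrac{1}{K}|\zeta|^{2/K-2}$. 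Scaling by $r^{1-1/K}$ multiplies the Jacobian by $r^{2-2/K}$, yielding
\[
J(z,g_N)=\tfrac{1}{K}\bigl|\zeta/r\bigr|^{2(1/K-1)}.
\]
Finally, recalling the recursion $r_{\{N,p\},\{N-1,h\},\ldots,\{1,j\}}=R_{N,p}\,\sigma_{N-1,h}\,r_{\{N-1,h\},\ldots,\{1,j\}}$ and $r_{1,j}=R_{1,j}$, one unwinds $r=(\sigma_{1,j}\cdots\sigma_{N-1,h})(R_{1,j}\cdots R_{N,p})$, which is exactly the denominator appearing in \eqref{FormulaJacobianOfGN}.

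The only mild obstacle is the middle computation: keeping track of the Wirtinger derivatives of $(\zeta\bar\zeta)^{(1/K-1)/2}\zeta$ cleanly and simplifying $(1/K+1)^{2}-(1-1/K)^{2}=4/K$. Once this is done, combining the three cases produces \eqref{FormulaJacobianOfGN} verbatim.
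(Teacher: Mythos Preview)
Your proof is correct and is exactly the direct computation the paper alludes to; the paper's own proof consists of the single sentence ``This comes from direct calculations and equations \eqref{FormulaGN} and \eqref{FormulaDIJAndDIJPrime},'' and you have simply carried out those calculations explicitly and accurately.
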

\begin{proof}
This comes from direct calculations and equations \eqref{FormulaGN} and \eqref{FormulaDIJAndDIJPrime}.
\end{proof}


\begin{remark}\label{ConsequencesFormulaJacobianGN} As a consequence of  Lemma \ref{JacobianOfGN}, we note that 
\begin{itemize}
\item[(a)] $J(z,g_N)$ is radial in $D^{I}_{J}$ with respect to the center $z^{I}_{J}$.
\item[(b)] $J(z,g_N)$ is radially decreasing in $D^{I}_{J}\setminus (D^{I}_{J})'$. 
\item[(c)] $J(z,g_N)$ is radially nonincreasing in $D^{I}_{J}$.
\end{itemize}
\end{remark}
\noindent
We will reduce some of the cases appearing in the proof of \eqref{IntegralHolderCondition} to the following

\begin{lemma}\label{CheckingIntegralHolderConditionForPhiNForDisksContainedAndConcentricToPN}
Let $D$ be a disk contained in $P^{N}_{I;J}$, for some $P^{N}_{I;J}$.
\begin{enumerate}
\item[(a)] If $D \subseteq G^{N}_{I;J}$, there exists a constant $C>0$, independent of $D$ and $N$, such that (see \eqref{IntegralHolderCondition}) $$\int_DJ(z,\phi_N)\,dA(z)\leq\,C\,\diam(D)^\frac{2t}{t'}.$$
\item[(b)] If $D$ is concentric to $P^{N}_{I;J}$, the conclusion in $(a)$ also holds.
\end{enumerate}
\end{lemma}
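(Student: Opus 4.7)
The plan is to exploit the factorization $\phi_N = g_N \circ \phi_{N-1}$ together with a single structural fact about the auxiliary maps: on every generating disk $G^{N-1}_{I';J'}$, and hence on every protecting disk $P^{N}_{I;J} \subset G^{N-1}_{I';J'}$, the map $\phi_{N-1}$ acts as a similitude with scaling factor $\prod_{k=1}^{N-1} \sigma_{k,j_k}^{1-K}$. This follows by induction from the fact that each $g_k$ is linear on each $(D^{I}_{J})'$ at scale $k$. Two consequences are crucial: first, the change of variables $w = \phi_{N-1}(z)$ gives
\begin{equation*}
\int_D J(z,\phi_N)\,dA(z) \;=\; \int_{\phi_{N-1}(D)} J(w,g_N)\,dA(w);
\end{equation*}
second, if $D$ is concentric to $P^{N}_{I;J}$, then $\phi_{N-1}(D)$ is concentric to $\phi_{N-1}(P^{N}_{I;J}) = D^{I}_{J}$.

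For part (a), since $D \subseteq G^{N}_{I;J}$, composing the similitude action of $\phi_{N-1}$ on $G^{N}_{I;J}$ with the linear action of $g_N$ on $(D^{I}_{J})'$ shows that $\phi_N$ itself is a similitude on $D$, sending a disk of radius $s_{j_1,\ldots,j_N}$ onto a disk of radius $t_{j_1,\ldots,j_N}$. The Jacobian is therefore the constant $(t_{j_1,\ldots,j_N}/s_{j_1,\ldots,j_N})^2$, and by \eqref{SimplificationOfTargetAndSourceEquationsToArea} this equals $s_{j_1,\ldots,j_N}^{2(t-t')/t'}$. Since $(t-t')/t' < 0$ and $r(D) \leq s_{j_1,\ldots,j_N}$, we get $\int_D J(z,\phi_N)\,dA \leq \pi\,r(D)^{2t/t'}$, which is $\leq C\,\diam(D)^{2t/t'}$.

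For part (b), when $r(D) \leq s_{j_1,\ldots,j_N}$, concentricity forces $D \subseteq G^{N}_{I;J}$ and part (a) applies. In the remaining range $s_{j_1,\ldots,j_N} < r(D) \leq r(P^{N}_{I;J})$ one has $(D^{I}_{J})' \subsetneq \phi_{N-1}(D) \subseteq D^{I}_{J}$. Using that $J(\cdot,g_N)$ is radial and nonincreasing by Remark \ref{ConsequencesFormulaJacobianGN}, I would split the integral radially into the constant contribution on $(D^{I}_{J})'$ and the power-law contribution on the annulus; a short calculation shows the two $\sigma_{N,p}^2 r_N^2$ terms cancel telescopically, leaving
\begin{equation*}
\int_{\phi_{N-1}(D)} J(w,g_N)\,dA(w) \;=\; \pi\,r_N^{2-2/K}\,\rho'^{\,2/K},
\end{equation*}
where $\rho' = r(D)\prod_{k=1}^{N-1}\sigma_{k,j_k}^{1-K}$. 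Substituting this value of $\rho'$ together with $r_N = R_{N,p}\prod_{k=1}^{N-1}\sigma_{k,j_k}R_{k,j_k}$, the $\sigma$-exponents add to zero and the result collapses to $\pi\,r(D)^{2/K}\prod_{k=1}^{N}R_{k,j_k}^{2-2/K}$; invoking \eqref{ChoiceOfSigmaAsFunctionOfR}--\eqref{SimplificationOfTargetAndSourceEquationsToArea}, this rewrites as $\pi\,r(D)^{2/K}\,s_{j_1,\ldots,j_N}^{t(K-1)/K}$. Finally, since $r(D) \geq s_{j_1,\ldots,j_N}$ and \eqref{FormulaDOverDPrime} gives $2t/t' = 2/K + (K-1)t/K$, the desired bound $C\,\diam(D)^{2t/t'}$ follows.

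The main obstacle is the careful bookkeeping in part (b): tracking how the accumulated similitude factors of $\phi_{N-1}$ interact with the stretching factors in $g_N$, and verifying that the calibration \eqref{ChoiceOfSigmaAsFunctionOfR} is exactly what forces the $\sigma$-dependence to cancel. Everything else is essentially geometry of concentric disks plus one radial integration.
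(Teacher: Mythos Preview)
Your proof is correct and follows essentially the same route as the paper. In part (a) both you and the paper use that $J(\cdot,\phi_N)$ is constant on $G^N_{I;J}$ (equal to $\sigma^{2(1-K)}$ with $\sigma=\prod_k\sigma_{k,j_k}$) and finish via the calibration \eqref{ChoiceOfSigmaAsFunctionOfR}--\eqref{SimplificationOfTargetAndSourceEquationsToArea}; in part (b) the paper simply records that $\phi_N$ is radial on $P^N_{I;J}$ and reads off $r(\phi_N(D))=r(D)^{1/K}R^{1-1/K}$ with $R=\prod_k R_{k,j_k}$, whereas you reach the identical expression (squared) by factoring through $\phi_{N-1}$ and integrating $J(\cdot,g_N)$ radially --- your telescoping is just the change-of-variables identity $\int_{\phi_{N-1}(D)}J(w,g_N)\,dA=|\phi_N(D)|$ done by hand.
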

\begin{proof}
We first prove (a). Let us assume that $D \subseteq G^{N}_{I;J}$. Then,
$$r(D) \leq r(G^{N}_{I;J})=s_{j_1,...,j_N}=\sigma^K\,R$$
where we have written $\sigma=\sigma_{1,j_1} \, \dots \sigma_{N,j_N}$ and $R=R_{1,j_1} \dots   R_{N,j_N}$. Notice that $J(\cdot,\phi_N)$ is constant on $G^{N}_{I;J}$, so that we do not actually need $D$ to be concentric to $P^{N}_{I;J}$. An iteration of Lemma \ref{JacobianOfGN} and \eqref{FormulaDOverDPrime} give that
\begin{equation}\label{DContainedInGeneratorForPhiNcalculation1}
\int_D J(z,\phi_N)\,dA(z) 
= \sigma^{2\left( 1-K \right)} |D| 
= |D|^{\frac{t}{t'}} \sigma^{2\left( 1-K \right)} |D|^{ \frac{K-1}{K} \left( 1-\frac{t}{2} \right) }
= |D|^{\frac{t}{t'}} \left\{\frac{ |D|^{1-\frac{t}{2}} }{ \sigma^{2K}}  \right\}^{\frac{K-1}{K}}.
\end{equation}
Using \eqref{ChoiceOfSigmaAsFunctionOfR}, we can see that the term in braces in \eqref{DContainedInGeneratorForPhiNcalculation1} satisfies
\begin{equation}\label{DContainedInGeneratorForPhiNcalculation2}
\frac{ |D|^{1-\frac{t}{2}} }{ \sigma^{2K} } \approx 
\frac{ \left( \diam D  \right)^{2-t} }{\sigma^{2K} } 
\lesssim \frac{  \left(\sigma^K\,R \right)^{2-t} }{ \sigma^{2K} } = 
\frac{R^{2-t} }{ \sigma^{tK} } = 1.
\end{equation}
For the proof of (b), let us assume now that $ G^{N}_{I;J} \subseteq D \subseteq P^{N}_{I;J}$
. Then, 
\begin{equation}\label{BoundsForRDIfDInsideProtector}
\sigma^K\,R\leq r(D) \leq \frac{\sigma^K\,R}{\sigma_{N,j_{N}}^K}.
\end{equation}
Since $\phi_N$ is radial inside $P^{N}_{I;J}$
, and $\phi_N=g_N\circ\phi_{N-1}$, then it may be easily checked that $\phi_N (D)$ is a disk of radius
\begin{equation}\label{CalculationRadiusPhiNDIfDInsideProtector}
r(\phi_N (D)) 
=  r(D)^{ \frac{1}{K} } R^{ 1-\frac{1}{K} }.
\end{equation}
Hence, by \eqref{BoundsForRDIfDInsideProtector} and \eqref{ChoiceOfSigmaAsFunctionOfR},
\begin{equation}\label{EstimateOfQuotientOfAreaOfImageByAreaOfSourceRaisedToTOverTPrimeIfDInsideProtector}
\aligned
\left\{ \frac{ \int_D J(z,\phi_N)\,dA }{ |D|^{\frac{t}{t'}} } \right\}^{\frac{1}{2}} 
& \approx  \frac{ r(\phi_N (D)) }{ r(D)^{\frac{t}{t'}} }
=\frac{ r(D)^{\frac{1}{K}}\, R^{ 1-\frac{1}{K} } } { r(D)^{\frac{1}{K} + \frac{K-1}{2K}t  } } = \left(  \frac{R}{r(D)^{ \frac{t}{2} } }  \right)^{ \frac{K-1}{K} } \leq \\
& \leq \left( \frac{R}{ \left( \sigma^K\,R \right)^{\frac{t}{2}}} \right)^{ \frac{K-1}{K} } 
= \left( R^{1-\frac{t}{2}}\,\sigma^{-\frac{Kt}{2}}  \right)^{ \frac{K-1}{K} } = 1.
\endaligned
\end{equation}
\end{proof}

We will also make use of the following elementary geometric fact.

\begin{lemma}\label{IfDiskBDoesNotMeetMuchOfOtherDisksThenHalfBDoesNotMeetOtherDisks}
Let $B$ and $\{D_{i} \}_{i=1}^{n}$ be disks. Assume that $|B \cap D_i| \leq \frac{1}{100} |B|$ for all $i = 1, \dots ,n$, and that $D_i \nsubseteq B $ for all $i = 1, \dots ,n$. Then $\frac{1}{2}B \cap D_i = \emptyset$ for all $i = 1, \dots ,n$. 
\end{lemma}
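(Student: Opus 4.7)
The proof is by contradiction. Fix an index $i$ and suppose some $p\in\tfrac12B\cap D_i$ exists; write $\rho=r(B)$ and $r=r(D_i)$. My plan is to combine two trivial geometric observations---that $p\in\tfrac12B$ forces $D(p,\rho/2)\subseteq B$, and that $p\in D_i$ forces $D_i\subseteq D(p,2r)$ by the triangle inequality---with an elementary area lower bound on the intersection of two disks in generic position, in order to contradict either $D_i\not\subseteq B$ or $|B\cap D_i|\leq|B|/100$.

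The argument splits according to the size of $D_i$. If $r\leq\rho/4$, then by the two containments above $D_i\subseteq D(p,2r)\subseteq D(p,\rho/2)\subseteq B$, already contradicting the non-containment hypothesis. So I may assume $r>\rho/4$. I would then set $a=\min(\rho/2,r)>\rho/4$, so that $D(p,a)\subseteq B$ and $a\leq r$. The key geometric estimate is
\begin{equation*}
|D(p,a)\cap D_i|\;\geq\;\frac{\pi a^2}{3},
\end{equation*}
for which it suffices to treat the worst case $|p-c_{D_i}|=r$. Parametrising $x=p+s(\cos\theta,\sin\theta)$ with $\theta$ measured from the direction $p\to c_{D_i}$, the condition $x\in D_i$ unwinds to $\cos\theta\geq s/(2r)$. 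Since $s\leq a\leq r$ gives $s/(2r)\leq 1/2$, one has $\arccos(s/(2r))\geq\pi/3$, and integrating in polar coordinates around $p$ yields the stated bound. Combining with $a>\rho/4$,
\begin{equation*}
|B\cap D_i|\;\geq\;|D(p,a)\cap D_i|\;\geq\;\frac{\pi a^2}{3}\;>\;\frac{\pi\rho^2}{48}\;>\;\frac{\pi\rho^2}{100}\;=\;\frac{|B|}{100},
\end{equation*}
contradicting the area hypothesis.

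The only nontrivial step is identifying the worst-case configuration for the polar-coordinate computation (namely $p$ lying on the boundary of $D_i$, which minimises the intersection) and verifying that the integrand is bounded below by a positive constant on the relevant range; once these observations are in place, the integration itself is routine. Note that the constant $1/100$ in the hypothesis is comfortably loose---any constant strictly smaller than $1/48$ would suffice---which is why this is labelled as an elementary geometric fact.
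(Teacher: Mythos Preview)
Your proof is correct and follows the same contradiction strategy as the paper: assume $p\in\tfrac12B\cap D_i$ and derive a lower bound on $|B\cap D_i|$ exceeding $|B|/100$. The execution differs slightly. The paper asserts (without spelling out the details) that one can inscribe a disk $D'\subseteq B\cap D_i$ of radius $\tfrac14 r(B)$, yielding $|B\cap D_i|\geq|D'|=\tfrac{1}{16}|B|$; the justification implicitly uses the same case split you make (if $r\leq\rho/4$ then $D_i\subseteq B$, else the triangle inequality $|c_B-c_{D_i}|<\rho/2+r$ guarantees the two shrunken disks $D(c_B,3\rho/4)$ and $D(c_{D_i},r-\rho/4)$ intersect). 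Your polar-coordinate computation around $p$ is a more direct and fully explicit way to obtain the area bound, trading the slicker constant $1/16$ for a self-contained argument giving $1/48$. Both are well within the slack afforded by $1/100$.
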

\begin{proof}
If that were not the case, then $\frac{1}{2}B \cap D_{i_0} \neq \emptyset$, for some $i_0$. Consider a disk $D' \subseteq B \cap D_{i_0}$ with $r(D') = \frac{1}{4} r(B)$ (e.g. if $D'$ is inner tangent to $B$.) Then $|B \cap D_{i_0}| \geq | B \cap D' | = |D'| = \frac{1}{16} |B| > \frac{1}{100} |B|$, a contradiction.
\end{proof}

\noindent
For the proof of \eqref{IntegralHolderCondition}, we first notice that there are disks $D$ that intersect infinitely many protecting and generating disks (for this, simply take $D$ such that its boundary has points of the set $E$). 
Because of this, the proof of \eqref{IntegralHolderCondition} will be divided into several cases, in all of which we will assume that $D$ satisfies
\begin{equation}\label{MaximalityHypothesisInN}
D\subseteq G^{N-1}_{I',J'}
\end{equation}
where $N$ is maximum possible. By Lemma \ref{ConsequenceJacobianDoublingMeasure}, $N$ always satisfies $N\geq 1$.

\begin{enumerate}
\item[(1)] {\bf{Case 1:  
$D \cap P^{N}_{I;J} = \emptyset$ for all $I,J$.
}} The case $N=1$ is trivial, since then $\phi (D) = D$. 
If $N>1$, then $\phi (D) = \phi_{N-1} (D)$, and Lemma \ref{CheckingIntegralHolderConditionForPhiNForDisksContainedAndConcentricToPN} (a) applies.

\item[(2)] {\bf{Case 2: 
$D \cap P^{N}_{I;J} \neq \emptyset$ for some $I,J$, but $D \cap G^{N}_{I;J} = \emptyset$ for any $G^{N}_{I;J}$.
}} Let $P^{N}_{{I_k};{J_k}},\,k=1,...,M$ denote the protecting disks of generation $N$ which satisfy that $D \cap P^{N}_{I_k;J_k} \neq \emptyset$. Notice that if $P^{N}_{I;J}$ is a protecting disk such that $D \cap P^{N}_{I;J} \neq \emptyset$ then $P^{N}_{I;J} \nsubseteq D$, because $G^{N}_{I;J} \subset P^{N}_{I;J}$ and $D \cap G^{N}_{I;J} = \emptyset$. We distinguish now two subcases, according to the size of the intersections $D\cap P^{N}_{I_k;J_k}$.

\begin{enumerate}
\item[(2a)] {\bf{Case 2a: $|D \cap P^{N}_{{I_k};{J_k}}| < \frac{1}{100} |D|$ for all $k$.
}} In this case, by Lemma \ref{ConsequenceJacobianDoublingMeasure}, Lemma \ref{IfDiskBDoesNotMeetMuchOfOtherDisksThenHalfBDoesNotMeetOtherDisks}, and Case 1 we have that
\begin{equation}\label{DiskDDoesNotMeetSubstantiallyAnyProtectingDisk}
\int_DJ(z,\phi)\,dA(z) \approx \int_{\frac{1}{2}D}J(z,\phi)\,dA(z) \lesssim |D|^{\frac{t}{t'}}.
\end{equation}

\item[(2b)] {\bf{Case 2b: There exists $k_0$ such that $|D \cap P^{N}_{{I_{k_0}};{J_{k_0}}}| \geq \frac{1}{100} |D|$.
}} In this case we necessarily have that $r( P^{N}_{{I_{k_0}};{J_{k_0}}} ) \geq \frac{1}{10} r(D)$. Otherwise, we would have a contradiction since $|D \cap P^{N}_{{I_{k_0}};{J_{k_0}}}| \leq |P^{N}_{{I_{k_0}};{J_{k_0}}} | < \frac{1}{100} |D|$. Thus, consider a disk $D' \subset P^{N}_{{I_{k_0}};{J_{k_0}}}$, inner tangent to $P^{N}_{{I_{k_0}};{J_{k_0}}}$, with radius $r(D') = \frac{1}{100} r(D)$, such that $D'\cap D \neq \emptyset$.
%
Let $D''$ be the disk of radius $r(D'') = r(D')$, concentric to $P^{N}_{{I_{k_0}};{J_{k_0}}}$. 
By Lemma \ref{ConsequenceJacobianDoublingMeasure}, $|\phi(D)|\simeq|\phi(D')|$. Since $D'\cap G^{N}_{{I_{k_0}};{J_{k_0}}}=\emptyset$, we get that $\phi(D')=\phi_N(D')$ as sets. But $D' \cap D'' = \emptyset$, so by Remark \ref{ConsequencesFormulaJacobianGN} and Lemma \ref{CheckingIntegralHolderConditionForPhiNForDisksContainedAndConcentricToPN} we get
$$|\phi(D)|\simeq\int_{D'}J(z,\phi_N)\leq\int_{D''}J(z,\phi_N)\leq C\,|D''|^{t/t'}\simeq C\,|D|^{t/t'}.$$

\end{enumerate}

\item[(3)] {\bf{Case 3: $D \cap G^{N}_{I;J} \neq \emptyset $ for exactly one disk $G^{N}_{I;J}$ (and not more.)
}} First of all, notice that $D$ will not be included in $G^{N}_{I;J}$  (although they have nonempty intersection) because from \eqref{MaximalityHypothesisInN} we know that $N$ is maximal. Let $P^N_{I;J}$ be the protecting disk corresponding to $G^{N}_{I;J}$. We distinguish three cases:
\begin{itemize}
\item[(a)] If $r(D)<r(G^N_{I;J})$, then we use Lemma \ref{ConsequenceJacobianDoublingMeasure} to replace $D$ by $D'$. Here $D'$ is obtained by translating $D$ not more than a distance $2r(D)$, so that $D'\cap G^N_{I;J}=\emptyset$. Now we are led to Case 2 above with the same $N$ for $D'$ and $D$ since $D' \subset P^N_{I;J} \subset G^{N-1}_{I',J'}$. 
\item[(b)] If $r(G^N_{I;J})\leq r(D)<r(P^N_{I;J})$, then we can translate $D$ to get a new disk $D'$ concentric with $P^N_{I;J}$, such that $r(D)=r(D')$. Then use Lemma \ref{ConsequenceJacobianDoublingMeasure} and Lemma \ref{CheckingIntegralHolderConditionForPhiNForDisksContainedAndConcentricToPN}.
\item[(c)] If $r(D)\geq r(P^N_{I,J})$, then we replace $D$ by $D' \subset D \subset G^{N-1}_{I',J'}$, where $r(D')=\frac{1}{10}\,r(D)$ and $D'$ does not meet any generating disk of $N$-th generation. Now we are led again to Case 2 or Case 1.
\end{itemize}

\item[(4)] {\bf{Case 4: $D$ meets at least $2$ different $G^N_{I,J}$.}} This is the most complicated situation, and its proof is given in Lemma \ref{IfDiskDmeetsAtLeast2GeneratorsGNAndDIsContainedInGNMinus1} below.
\end{enumerate}
\noindent
For the proof of Lemma \ref{IfDiskDmeetsAtLeast2GeneratorsGNAndDIsContainedInGNMinus1}, we will make use of the following interesting fact.

\begin{lemma}\label{IfDMeets2OrMoreGeneratingDisksThen4DContainsTheCorrespondingProtectingDisks}
Let $D$ be a disk. Let $\left\{G_i\right\}_{i=1}^{m}$ denote the collection of generating disks $G_i=G^{N}_{I_i;J_i}$, of generation $N$, such that $D \cap G_i \neq \emptyset$. Assume that $m \geq 2$. 
Then 
\begin{equation}\label{FormulaForIfDMeets2OrMoreGeneratingDisksThen4DContainsTheCorrespondingProtectingDisks}
\bigcup_{i=1}^{m} P_i\subseteq 4D.
\end{equation}
where $P_i=P^N_{I_i;J_i}$ is the protecting disk corresponding to $G^N_{I_i;J_i}$.
\end{lemma}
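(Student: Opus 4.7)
The plan is to exploit two structural facts from the construction: (i) all protecting disks at a fixed generation $N$ are pairwise disjoint, and (ii) each generating disk $G_i$ is concentric with its protecting disk $P_i$, with $r(G_i) = (\sigma_{N,p_i})^{K}\,r(P_i)$ for the last-step parameter $\sigma_{N,p_i}<\tfrac{1}{100}$ associated to $G_i$; in particular $r(G_i)\leq r(P_i)/100$ since $K\geq 1$.

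I would first establish (i) by induction on the generation. Two protecting disks $P^N_{I_i;J_i}$ sharing a common parent $G^{N-1}_{I';J'}$ are the images, under the affine parent map $\varphi^{i_1}_{1,j_1}\circ\cdots\circ\varphi^{i_{N-1}}_{N-1,j_{N-1}}$, of the normalized disks $\tfrac{1}{(\sigma_{N,j_N})^K}\varphi^{i_N}_{N,j_N}(\D)=D(z^{i_N}_{N,j_N},R_{N,j_N})\subset\D$, which are disjoint by the Step-$N$ construction. Protecting disks at generation $N$ having distinct parents lie in distinct parent generating disks, which are disjoint by the induction hypothesis, so they are disjoint as well.

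With (i) and (ii) in hand, fix $i\neq j$ and pick $x_i\in D\cap G_i$, $x_j\in D\cap G_j$. Let $z_i,z_j$ denote the common centers of $(G_i,P_i)$ and $(G_j,P_j)$. Disjointness of $P_i$ and $P_j$ yields $|z_i-z_j|\geq r(P_i)+r(P_j)$, while $|x_i-z_i|\leq r(G_i)\leq r(P_i)/100$, and similarly for $j$. The triangle inequality then gives $2r(D)\geq |x_i-x_j|\geq \tfrac{99}{100}\bigl(r(P_i)+r(P_j)\bigr)$, so in particular $r(P_i)\leq \tfrac{200}{99}\,r(D)$. Finally, for any $y\in P_i$ and $z_D$ the center of $D$, $|y-z_D|\leq r(P_i)+r(G_i)+r(D)\leq \tfrac{101}{100}\,r(P_i)+r(D)<4\,r(D)$, so $P_i\subseteq 4D$.

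The main conceptual step is recognizing the pairwise disjointness of the protecting disks at a fixed generation, which requires unpacking the normalization $\tfrac{1}{(\sigma_{N,j_N})^K}\varphi^{i_N}_{N,j_N}(\D)$ to identify these disks, inside a parent block, with the original Step-$N$ disks of the construction. Once that is in place, the remaining estimate is an elementary triangle-inequality argument, exploiting only that the $G_i$'s have negligible size compared to their $P_i$'s.
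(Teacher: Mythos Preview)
Your argument is correct and follows essentially the same route as the paper: use pairwise disjointness of the protecting disks at generation $N$ together with $r(G_i)\leq r(P_i)/100$ to get $2r(D)\geq\tfrac{99}{100}(r(P_i)+r(P_j))$, and then conclude $P_i\subset 4D$ by the triangle inequality. The paper simply asserts the disjointness of the $P_i$ without proof, whereas you supply the (straightforward) inductive justification; otherwise the two proofs coincide.
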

\begin{proof}
This Lemma is proved in \cite{Ur}, but we repeat the
proof for the convenience of the reader. Recall that the parameters $R_{k,j_k}$ are chosen so small that the parameters $\sigma_{k,j_k}$ are also quite small, say $< \, \frac{1}{100}$. 
By hypothesis, $D \cap G_i\neq \emptyset$ for all $i=1, \dots , m $ and $m \geq 2$, 
therefore
\begin{equation}\label{CompareRadiusDWithRadiusPN}
2 \, r(D) \geq \frac{99}{100} \,\, r(P_i)
\end{equation}
for $i=1, \dots , m $, since $G_i$ is a disk concentric to $P_i$, tiny in comparison with $P_i$, and the disks $P_i$ are pairwise disjoint. Consequently, for $i=1, \dots , m $,
\begin{equation}\label{DilatedDCoversPN}
G_i \subset 2D \hspace{1cm}\text{ and }\hspace{1cm}P_i \subset 4D.
\end{equation}
\end{proof}

We finally get to Lemma \ref{IfDiskDmeetsAtLeast2GeneratorsGNAndDIsContainedInGNMinus1} in order to conclude the proof of \eqref{IntegralHolderCondition}.

\begin{lemma}\label{IfDiskDmeetsAtLeast2GeneratorsGNAndDIsContainedInGNMinus1}
Let $B$ be a disk, and let  $G^{N-1}_{I';J'}$ be the smallest generating disk such that $B\subseteq G^{N-1}_{I',J'}$. Assume that $B$ intersects at least two generating disks $G^{N}_{I_i;J_i}$ of $N$-th generation, i.e. $D\cap G^{N}_{I_i;J_i}\neq \emptyset$ for $i=1,2$.
Then \eqref{IntegralHolderCondition} holds for $B$ and the $K$-quasiconformal mapping $\phi$ from Theorem \ref{TheoremConstructionForUsualHausdorffMeasures}.
\end{lemma}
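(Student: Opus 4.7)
The plan is to first replace $\phi$ by the more explicit $\phi_N$ at the cost of a bounded factor, then estimate $|\phi_N(B)|$ using the affine-conformal behavior of $\phi_{N-1}$ on $G^{N-1}_{I',J'}$ together with the explicit form of $g_N$, and finally convert the bound into the desired exponent via the parameter choice \eqref{ChoiceOfSigmaAsFunctionOfR}. For the reduction, note that both $\phi$ and $\phi_N$ map each level-$N$ generating disk $G^N_{I;J}$ onto the image disk of radius $t_{j_1,\dots,j_N}$ from \eqref{RadiusTargetNthStep}, so $|\phi(G^N_{I;J})|=|\phi_N(G^N_{I;J})|$, and $\phi$ agrees pointwise with $\phi_N$ on $\C\setminus\bigcup G^N$ (all further modifications act inside the level-$N$ generating disks). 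Combined with Lemma \ref{IfDMeets2OrMoreGeneratingDisksThen4DContainsTheCorrespondingProtectingDisks}, which gives $G^N_{I_k;J_k}\subset 2B$, we obtain
\[\int_B J(z,\phi)\,dA\le\int_{B\cup\bigcup_k G^N_{I_k;J_k}} J(z,\phi_N)\,dA\le\int_{2B}J(z,\phi_N)\,dA\lesssim\int_B J(z,\phi_N)\,dA,\]
where the last step uses the doubling of $J(\cdot,\phi_N)$ (Lemma \ref{ConsequenceJacobianDoublingMeasure}). Thus it suffices to prove $|\phi_N(B)|\lesssim r(B)^{2t/t'}$.

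Write $\sigma^{(N-1)}=\prod_{k=1}^{N-1}\sigma_{k,j'_k}$ and $R^{(N-1)}=\prod_{k=1}^{N-1}R_{k,j'_k}$, so that $\phi_{N-1}$ is an affine similarity of scale $(\sigma^{(N-1)})^{1-K}$ on $G^{N-1}_{I',J'}\supseteq B$. The piece $B\setminus\bigcup P^N_{I;J}$ contributes at most $(\sigma^{(N-1)})^{2(1-K)}|B|$ because $\phi_N$ coincides there with $\phi_{N-1}$. For each protecting disk $P^N_{I;J}$ meeting $B$, consider two regimes. If $r(P^N)\le 4r(B)$, use the crude bound $|\phi_N(B\cap P^N)|\le|\phi_N(P^N)|=(\sigma^{(N-1)})^{2(1-K)}|P^N|$; the involved $P^N$'s are pairwise disjoint, those corresponding to generating disks meeting $B$ lie in $4B$ by Lemma \ref{IfDMeets2OrMoreGeneratingDisksThen4DContainsTheCorrespondingProtectingDisks}, and the remaining ones must cross $\partial B$ (the Case 4 hypothesis excludes $B\subseteq P^N$) and hence lie in $9B$, giving $\sum|P^N|\lesssim|B|$. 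If $r(P^N)>4r(B)$, again $B\not\subseteq P^N$ and so $B$ crosses $\partial P^N$, whence every $z\in B\cap P^N$ satisfies $|z-z_{P^N}|/r(P^N)\ge 1-2r(B)/r(P^N)>1/2$; by Lemma \ref{JacobianOfGN}, $J(\phi_{N-1}(z),g_N)\le(1/K)\,2^{2(K-1)/K}=:C_K$, so $J(z,\phi_N)\le C_K(\sigma^{(N-1)})^{2(1-K)}$, and summing over these disjoint $P^N$'s yields $\sum|\phi_N(B\cap P^N)|\le C_K(\sigma^{(N-1)})^{2(1-K)}|B|$. Altogether, $|\phi_N(B)|\lesssim(\sigma^{(N-1)})^{2(1-K)}|B|$.

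To conclude, iterating \eqref{ChoiceOfSigmaAsFunctionOfR} gives $(\sigma^{(N-1)})^{tK}=(R^{(N-1)})^{2-t}$, and hence $r(G^{N-1}_{I',J'})=(\sigma^{(N-1)})^K R^{(N-1)}=(\sigma^{(N-1)})^{2K/(2-t)}$. Since $2-2t/t'=(K-1)(2-t)/K$ and $r(B)\le r(G^{N-1}_{I',J'})$, one has $r(B)^{(K-1)(2-t)/K}\le(\sigma^{(N-1)})^{2(K-1)}$, which is equivalent to $(\sigma^{(N-1)})^{2(1-K)}|B|\lesssim r(B)^{2t/t'}$, giving \eqref{IntegralHolderCondition}. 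The main obstacle is the regime $r(P^N)>4r(B)$: naively summing $|\phi_N(P^N)|$ would fail catastrophically since $|P^N|$ can dwarf $|B|$. The Case 4 hypothesis is essential precisely because it forces $B\cap P^N$ into the outer part of the annulus $P^N\setminus G^N_{I;J}$, where the radial stretch $g_N$ has Jacobian controlled purely by $K$.
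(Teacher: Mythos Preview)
Your proof is correct and follows a genuinely different route from the paper's.

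Both arguments reduce from $\phi$ to a finite-stage map, but at different levels. The paper passes to $\phi_{N-1}$ via the set identity $\phi(P^N)=\phi_{N-1}(P^N)$, which forces it to get \emph{all} protecting disks meeting the test disk into a bounded dilate. This is delicate: a large protecting disk may graze $B$ without its generating disk touching $B$, so Lemma~\ref{IfDMeets2OrMoreGeneratingDisksThen4DContainsTheCorrespondingProtectingDisks} does not apply directly. The paper handles this by a halving trick --- first disposing (via Case~2b) of any such protector that meets $B$ substantially, then shrinking to $D=\tfrac12 B$ so the remaining ones are avoided entirely, and only then applying Lemma~\ref{IfDMeets2OrMoreGeneratingDisksThen4DContainsTheCorrespondingProtectingDisks} (twice) to conclude $\int_D J(\phi)\le\int_{8D}J(\phi_{N-1})$ and invoke Lemma~\ref{CheckingIntegralHolderConditionForPhiNForDisksContainedAndConcentricToPN}(a). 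You instead pass to $\phi_{N}$ via $\phi(G^N)=\phi_N(G^N)$, which requires only the \emph{generating} disks to sit in $2B$ --- immediate from Lemma~\ref{IfDMeets2OrMoreGeneratingDisksThen4DContainsTheCorrespondingProtectingDisks}. The price is that you must then control $\int_B J(\phi_N)$ directly, and the same troublesome large protectors reappear. Your key observation is that the Case~4 hypothesis forces $B\not\subseteq P^N$, so $B$ can only meet the outer rim of any large $P^N$, where the explicit radial Jacobian of $g_N$ from Lemma~\ref{JacobianOfGN} is bounded by a constant depending only on $K$. This is a cleaner, more self-contained treatment of the obstruction: rather than eliminating the bad protectors by shrinking and case-referencing, you absorb them via a pointwise Jacobian bound. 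Both routes terminate in the same parameter computation, which is essentially Lemma~\ref{CheckingIntegralHolderConditionForPhiNForDisksContainedAndConcentricToPN}(a) at level $N-1$.
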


\begin{proof}
Let $G(B)^{N}_{I_i;J_i},\, i=1,...,m$ be the generating disks (of generation $N$) that intersect $B$, i.e. such that $B \cap G(B)^{N}_{I_i;J_i} \neq \emptyset$. By assumption, 
\begin{equation}\label{mgeq2}
 m \geq 2.
\end{equation}
We denote the protecting disks associated to $G(B)^{N}_{I_i;J_i}$ by $P(B)^{N}_{I_i;J_i}$. Let also $P(B)^{N}_{\widetilde{I_j};\widetilde{J_j}},\, j=1,...,q$ be the protecting disks (if there are any) of generation $N$ that intersect $B$, but such that $B$ does not intersect $G(B)^N_{\widetilde{I_j};\widetilde{J_j}}$ (the corresponding generating disks.)\\
We can assume that 
$$|B \cap P(B)^{N}_{\widetilde{I_j};\widetilde{J_j}}| < \frac{1}{100} |B|$$ 
for all $j$.
Otherwise, 
the same proof as for Case 2b above yields the proof of \eqref{IntegralHolderCondition} for $B$.\\
We also know that $P(B)^{N}_{\widetilde{I_j};\widetilde{J_j}} \nsubseteq B$ for all $j$ (since $B \cap G(B)^{N}_{\widetilde{I_j};\widetilde{J_j}} = \emptyset$ for all $j$.) Hence, by Lemma \ref{IfDiskBDoesNotMeetMuchOfOtherDisksThenHalfBDoesNotMeetOtherDisks}, $D = \frac{1}{2}B$ satisfies 
\begin{equation}\label{DDoesnotMeetProtectingDisksSuchThat2DDoesNotMeetTheirGeneratingDisk}
D \cap P(B)^{N}_{\widetilde{I_j};\widetilde{J_j}} = \emptyset \text{  for all  } j.
\end{equation}
and of course 
$$\int_D J(z,\phi)\,dA(z)\simeq\int_BJ(z,\phi)\,dA(z).$$
Now we can repeat the scheme above (since the beginning of section \ref{CalculationHolderExponentPhi}) with $D$ instead of $B$. Thus, let us denote by $G(D)^{N}_{I_i;J_i},\,i=1,...,m'$ the generating disks of generation $N$ that intersect $D$, 
and let $P(D)^{N}_{I_i;J_i}$ be the associated protectors. If $m' \leq 1$, then we are reduced to the above Cases already dealt with. So we are left with the assumption $m'\geq 2$. 
\\
\\
In this case we have that for all $i=1,...,m'$
$$r(G(D)^N_{I_i;J_i})<r(D)<r(G^{N-1}_{I',J'}).$$
Indeed, $D\subset B\subseteq G^{N-1}_{I',J'}$ so that $r(D)<r(G^{N-1}_{I',J'})$. On the other hand, if $r(G(D)^N_{I_i;J_i}) \geq r(D)$, then $r(B) = 2r(D) \leq 2 r(G(D)^{N}_{I;J}) \ll r(P(D)^{N}_{I;J})$, and since $B \cap G(D)^{N}_{I;J} \neq \emptyset$, then $B \subset P(D)^{N}_{I;J}$, which contradicts equation \eqref{mgeq2}.\\
\\
Let us now explain the main advantage of working with $D$ instead of $B$. Let $P(D)^N_{\widetilde{I_j};\widetilde{J_j}}, j=1,...,q'$ be the protecting disks (if any such disk exists) of $N$-th generation that intersect $D$, and whose corresponding generating disks $G(D)^N_{\widetilde{I_j};\widetilde{J_j}}$ do not, i.e. $D\cap G(D)^N_{\widetilde{I_j};\widetilde{J_j}}=\emptyset$ for all $j$. We have that 
\begin{equation}\label{2DMeetsGeneratingDisksSuchThatDMeetsTheirProtectingDiskButNotTheGeneratingDisk}
2D \cap G(D)^{N}_{\widetilde{I_j};\widetilde{J_j}} \neq \emptyset \text{  for all  } j. 
\end{equation}
Otherwise, $P(D)^{N}_{\widetilde{I_j};\widetilde{J_j}}$ (which meets $D \subset B = 2D$) would be a protecting disk of the type $P(B)^{N}_{\widetilde{I_j};\widetilde{J_j}}$, contradicting \eqref{DDoesnotMeetProtectingDisksSuchThat2DDoesNotMeetTheirGeneratingDisk}. This is actually the key point for the end of the proof.\\
\\
We can now use Lemma \ref{IfDMeets2OrMoreGeneratingDisksThen4DContainsTheCorrespondingProtectingDisks} twice. On the one hand,
$$\bigcup_{i=1}^{m'} P(D)^{N}_{I_i;J_i} \subseteq 4D,$$ 
and on the other hand, due also to \eqref{2DMeetsGeneratingDisksSuchThatDMeetsTheirProtectingDiskButNotTheGeneratingDisk}, we have
$$\bigcup_{j=1}^{q'} P(D)^{N}_{\widetilde{I_j};\widetilde{J_j}} \subseteq 8D.$$
Notice that $\phi(P^N_{I_i,J_i})=\phi_{N-1}(P^N_{I_i,J_i})$ as sets (and analogously for $P^{N}_{\widetilde{I_j};\widetilde{J_j}}$), while $\phi=\phi_{N-1}$ out of the protecting disks of $N$-th generation. Hence, 
we get by Lemma \ref{ConsequenceJacobianDoublingMeasure} and Lemma \ref{CheckingIntegralHolderConditionForPhiNForDisksContainedAndConcentricToPN} (a),
$$
\aligned
\int_D J(z,\phi)\,dA(z)
&\leq\int_{D\cup\bigcup_{i=1}^{m'} P^{N}_{I_i;J_i}\cup\bigcup_{j=1}^{q'} P^{N}_{\widetilde{I_j};\widetilde{J_j}} }J(z,\phi)\,dA(z) = \\
&=\int_{D\cup\bigcup_{i=1}^{m'} P^{N}_{I_i;J_i}\cup\bigcup_{j=1}^{q'} P^{N}_{\widetilde{I_j};\widetilde{J_j}} }J(z,\phi_{N-1})\,dA(z) \leq \\
&\leq\int_{8D}J(z,\phi_{N-1})\,dA(z) \simeq 
\int_{D}J(z,\phi_{N-1})\,dA(z)\lesssim |D|^{\frac{t}{t'}}.
\endaligned
$$

\end{proof}

\section{
Proof of Theorem \ref{MainTheoremNonRemovableSetsAtCriticalDimension}
}\label{FinalConstructionOfHolderQRMap}

We write the following Lemma \ref{HolderExponentOfCauchyTransformOfMeasureWithGivenGrowth} for the reader's convenience, even though the arguments are known (see \cite{Ca}).

\begin{lemma}\label{HolderExponentOfCauchyTransformOfMeasureWithGivenGrowth}
Let $\alpha \in (0,1)$. Let $\mu$ be a positive Radon measure supported on a compact set $A \subset \C$, such that 
$$\mu(D(z,r))\leq C\,r^{1+\alpha}$$ 
for any $z\in A$. Its Cauchy transform $f={\cal C}\mu = \frac{1}{\pi} \; \mu\ast\frac{1}{z}$ defines a holomorphic function on $\C\setminus A$, not entire, and with a H\"older continuous extension to $\C$, 
with exponent $\alpha$.
\end{lemma}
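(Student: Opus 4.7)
The plan is to verify the three assertions in turn. For holomorphy on $\C\setminus A$, I would fix $z_0 \in \C\setminus A$, observe that on any compact neighborhood of $z_0$ the kernel $\zeta \mapsto 1/(z-\zeta)$ depends holomorphically on $z$ with derivatives uniformly bounded for $\zeta$ in the compact support of $\mu$, and conclude by differentiation under the integral sign. For non-entirety, I would use the distributional identity $\bar\partial(1/(\pi z)) = \delta_0$, which gives $\bar\partial f = \mu$ as distributions; if $f$ were entire then $\mu$ would vanish, contradicting the hypothesis. The growth condition $\mu(D(z,r)) \leq C r^{1+\alpha}$ together with the compactness of $A$ also makes the defining integral absolutely convergent at every $z\in\C$, so it is natural to take $f$ to be defined pointwise on all of $\C$ from the start.

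The main content is the H\"older estimate. For $z,w\in \C$ set $r = |z-w|$ and write
\begin{equation*}
f(z) - f(w) = \frac{w-z}{\pi}\int \frac{d\mu(\zeta)}{(z-\zeta)(w-\zeta)}.
\end{equation*}
The strategy is the standard near/far split: set $A_1 = \{|\zeta - z| \leq 2r\}$ and $A_2 = \{|\zeta - z| > 2r\}$. The workhorse is the dyadic estimate
\begin{equation*}
\int_{|\zeta - z_0| \leq R} \frac{d\mu(\zeta)}{|z_0 - \zeta|} \lesssim R^{\alpha}, \qquad \int_{|\zeta - z_0| > \rho} \frac{d\mu(\zeta)}{|z_0 - \zeta|^2} \lesssim \rho^{\alpha-1},
\end{equation*}
both obtained by decomposing into dyadic annuli about $z_0$ and applying the growth assumption term by term (with geometric series converging because $\alpha \in (0,1)$).

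On $A_1$ the integrand is bounded pointwise by $1/|z-\zeta|+1/|w-\zeta|$; since $A_1 \subset D(z,2r) \cap D(w,3r)$, the first dyadic estimate applied at both $z$ and $w$ gives a contribution of order $r^\alpha$, absorbing the prefactor $|z-w|$ into the $1/|\cdot|$ singularity. On $A_2$ the triangle inequality gives $|w-\zeta| \geq |z-\zeta|/2$, so this piece is bounded by $2r \int_{|\zeta-z|>2r} d\mu(\zeta)/|z-\zeta|^2$; the second dyadic estimate controls this by $r \cdot r^{\alpha-1} = r^\alpha$. Adding the two pieces yields $|f(z)-f(w)| \lesssim |z-w|^\alpha$, and the Cauchy transform is bounded on compact sets, so this gives the desired $\alpha$-H\"older continuous extension to $\C$.

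I do not anticipate a real obstacle, but the one place to be careful is the far-field estimate on $A_2$: the geometric sum for $\int d\mu/|z-\zeta|^2$ is dominated by its smallest-scale term precisely because $\alpha - 1 < 0$. If $\alpha = 1$ the sum becomes logarithmic and the na\"ive argument would produce only a Zygmund-type modulus of continuity, which is why the borderline case handled by Uy in the analytic setting, mentioned in the introduction, requires a different argument.
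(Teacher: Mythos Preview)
Your argument is correct and is the standard one: near/far dyadic splitting for the H\"older estimate, differentiation under the integral for holomorphy off the support, and the distributional identity $\bar\partial\mathcal C\mu=\mu$ for non-entirety. The paper does not actually supply a proof of this lemma; it states it ``for the reader's convenience'' and refers to Carleson \cite{Ca} for the known arguments, so there is nothing to compare against beyond noting that your approach is precisely the classical one. Two minor remarks: the growth hypothesis is stated only for centers $z\in A$, but the usual doubling trick (replace $z$ by the nearest point of $A$) extends it to all centers with a harmless change of constant, which you are implicitly using; and ``not entire'' of course requires $\mu\neq 0$, which is implicit in the paper's intended application via Frostman's lemma.
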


As a consequence of Theorems \ref{TheoremConstructionForUsualHausdorffMeasures} and \ref{TheoremYieldingHolderExponentPhi} we can now prove our main result.

\begin{cor}\label{FinalConstructionKQRMappingForSetE}
Let $K\geq 1$ and $\alpha\in(0,1)$. For $d=2\frac{1+\alpha K}{1+K}$ there exists a compact set $E$ with~$0<\H^d(E)<\infty$, non removable for $K$-quasiregular mappings in $\Lip_\alpha (\C)$.
\end{cor}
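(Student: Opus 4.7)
The plan is to combine Theorems \ref{TheoremConstructionForUsualHausdorffMeasures} and \ref{TheoremYieldingHolderExponentPhi} applied at the critical value $t=d$ with a Cauchy-transform construction on $\phi(E)$, and then check that the composition $f=h\circ\phi$ realizes the H\"older exponent $\alpha$ on the nose. From $d=\frac{2(1+\alpha K)}{1+K}$ one computes $(K+1)d=2(1+\alpha K)$ and $d<2$, so $\frac{2}{K+1}<d<2$ and hence $d'=\frac{2Kd}{2+(K-1)d}$ lies strictly between $1$ and $2$. Apply Theorem \ref{TheoremConstructionForUsualHausdorffMeasures} at $t=d$ to produce a compact set $E\subset\overline{\D}$ with $0<\H^d(E)<\infty$ and a $K$-quasiconformal self-map $\phi$ of $\C$, equal to the identity off $\D$, with $0<\H^{d'}(\phi(E))<\infty$; Theorem \ref{TheoremYieldingHolderExponentPhi} moreover gives $\phi\in\Lip_{d/d'}$ locally.

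Next I would build the transferred function. By Frostman's lemma, since $\H^{d'}(\phi(E))>0$, there is a nontrivial positive Radon measure $\mu$ supported on $\phi(E)\subset\overline\D$ with $\mu(D(w,r))\lesssim r^{d'}$. Because $d'-1\in(0,1)$, Lemma \ref{HolderExponentOfCauchyTransformOfMeasureWithGivenGrowth} produces a nonentire function $h=\mathcal{C}\mu\in\Lip_{d'-1}(\C)$, holomorphic on $\C\setminus\phi(E)$. Set $f=h\circ\phi$. Since $\phi$ is $K$-quasiconformal on all of $\C$ and $h$ is holomorphic off $\phi(E)$, $f$ is $K$-quasiregular on $\C\setminus E$. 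To verify that $f\in\Lip_\alpha(\C)$, note first that $\phi=\mathrm{id}$ off $\D$, so $f=h\in\Lip_{d'-1}$ there; and since $\frac{2K}{2+(K-1)d}>1$ (which is just $d<2$) one has $d'-1>\alpha$, hence $\Lip_{d'-1}\subset\Lip_\alpha$ on that region. Inside $\overline\D$, composing H\"older exponents and using $d/d'=\frac{1}{K}+\frac{K-1}{2K}d$ from \eqref{FormulaDOverDPrime} gives
\begin{equation*}
\frac{d}{d'}(d'-1) = d-\frac{d}{d'} = \frac{(K+1)d-2}{2K} = \frac{2\alpha K}{2K} = \alpha,
\end{equation*}
so $f\in\Lip_\alpha$ on $\overline\D$; together with the trivial bridging across $\partial\D$, $f\in\Lip_\alpha(\C)$.

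It remains to rule out any $K$-quasiregular extension of $f$ to $\C$. If such an extension existed, its Beltrami coefficient would coincide a.e.\ with $\mu_\phi$ on $\C\setminus E$ (because $h$ is holomorphic there), and hence a.e.\ on $\C$ since $|E|=0$. By Stoilow factorization, the extension would then take the form $F\circ\phi$ with $F$ entire, and composing with $\phi^{-1}$ on the range $\phi(\C)=\C$ would force $h=F$ to be entire. This contradicts $\bar\partial h=\mu\neq 0$ as distributions. The main obstacle throughout is the H\"older-exponent computation: the naive bound $\phi\in\Lip_{1/K}$ would only yield $f\in\Lip_{(d'-1)/K}$, and $(d'-1)/K=\frac{2\alpha}{2+(K-1)d}<\alpha$ strictly when $K>1$; it is precisely the sharp exponent $d/d'$ delivered by Theorem \ref{TheoremYieldingHolderExponentPhi} that promotes the regularity of $f$ up to exactly the critical value $\alpha$, and thereby places the nonremovable set at the critical dimension $d$.
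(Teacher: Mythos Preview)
Your proof is correct and follows essentially the same route as the paper: apply Theorems \ref{TheoremConstructionForUsualHausdorffMeasures} and \ref{TheoremYieldingHolderExponentPhi} at $t=d$, take a Frostman measure on $\phi(E)$, form the Cauchy transform, precompose with $\phi$, and use $(d'-1)\tfrac{d}{d'}=\alpha$; your Stoilow argument for the non-extension is a more explicit version of the paper's one-line observation that $\tilde g=\tilde f\circ\phi^{-1}$ would be an entire extension of $g$. The only small omission is that Theorem \ref{TheoremConstructionForUsualHausdorffMeasures} is stated for $K>1$, so you should dispose of $K=1$ separately (where $d=d'=1+\alpha$, $\phi$ is conformal, and the result is Dol\v zenko's), as the paper does.
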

\begin{proof}
If $K=1$, then the result follows by Dol\v zenko's work \cite{Do}. 
Let $E$ and $\phi$ be as in Theorems \ref{TheoremConstructionForUsualHausdorffMeasures} with $t=d$, so that $0<\H^t(E)<\infty$ and $0<\H^{d'}(\phi (E))<\infty$. By Frostman's Lemma, we can construct a positive Radon measure $\mu$ supported on $\phi(E)$, with growth~$d'$. By Lemma \ref{HolderExponentOfCauchyTransformOfMeasureWithGivenGrowth}, its Cauchy transform $g={\cal C}\mu$ defines a holomorphic function on $\C\setminus\phi(E)$, not entire, and with a H\"older continuous extension to the whole plane, with exponent $d'-1$. Set
$$f=g\circ\phi.$$
Clearly, $f$ is $K$-quasiregular on $\C\setminus E$ and has no $K$-quasiregular extension to $\C$. Indeed, if $\tilde{f}$ extends $f$ $K$-quasiregularly to $\C$, then $\tilde{g}=\tilde{f}\circ\phi^{-1}$ would provide an entire extension of $g$, which is impossible. Furthermore, by Theorem \ref{TheoremYieldingHolderExponentPhi}, $f$ is (locally) H\"older continuous with exponent $(d' -1)\frac{d}{d'}=\alpha$. This finishes the proof.
\end{proof}


{\it{Acknowledgements.}} Part of this work was done while the second author was visiting Universitat Aut\`onoma de Barcelona and Universidad Aut\'{o}noma de Madrid. We thank both institutions for their hospitality. The authors also wish to thank K. Astala, J. Mateu and J. Orobitg for nice conversations on the subject of the paper. The picture was taken from \cite{Ur}, where it was done with the help of Mar\'{i}a Jos\'{e} Mart\'{i}n, so we extend our thanks to her.

\vskip 1cm
\begin{itemize}

\item[]{Departament de Matem\`atiques, Facultat de Ci\`encies, Universitat Aut\`onoma de Barcelona, 08193-Bellaterra, Barcelona, Catalonia\\
Department of Mathematics and Statistics, P.O.Box 68 (Gustaf H\"{a}llstr\"{o}min katu 2b), FI - 00014 University of Helsinki, Finland\\
{\it E-mail address:} {albertcp@mat.uab.cat}\\
{\it URL adress:} {www.mat.uab.cat/$\sim$albertcp}
}

\item[]{Mathematics Department, 202 Mathematical Sciences Bldg., University of Missouri, Columbia, MO 65211-4100, USA\\ 
{\it E-mail address:} {ignacio@math.missouri.edu}}
\end{itemize}

\end{document}